\numberwithin{equation}{section}
\theoremstyle{plain}
\newtheorem{theorem}{Theorem}[section]
\newtheorem*{theorem*}{Theorem}
\newtheorem{proposition}[theorem]{Proposition}
\newtheorem{lemma}[theorem]{Lemma}
\theoremstyle{remark}
\newtheorem{remark}[theorem]{Remark}
\newcommand{\PP}{\mathbb{P}}
\newcommand{\EE}{\mathbb{E}}
\newcommand{\R}{\mathbb{R}}
\newcommand{\N}{\mathbb{N}}
\DeclareMathOperator{\dist}{dist}
\DeclareMathOperator{\diam}{diam}
\DeclareMathOperator{\hdim}{dim_H}
\DeclareMathOperator{\CUT}{CUT}
\DeclareMathOperator{\LCUT}{LCUT}
\begin{document}

\title
{The largest slice of fractal percolation}

\author[P.~Shmerkin]{Pablo Shmerkin}
	\address[P.S.]{Department of Mathematics\\
		The University of British Columbia\\
		Room 121, 1984 Mathematics Road\\
		Vancouver, BC\\
		Canada V6T 1Z2
	}
	\email{pshmerkin@math.ubc.ca}
\author[V.~Suomala]{Ville Suomala}
\address[V.S.]{Research Unit of Mathematical Sciences,
        P.O.Box 8000, FI-90014,  University of Oulu,
    Finland}
    \email{ville.suomala@oulu.fi}

\thanks{P.S. was partially supported by an NSERC Discovery Grant. V.S. was partially supported by the Magnus Ehrnrooth Foundation} 
	
\subjclass[2020]{Primary 60D05; Secondary 28A80, 60J85.}
\keywords{}

\begin{abstract}
For each $k\ge 3$, we determine the dimensional threshold for planar fractal percolation to contain $k$ collinear points. In the critical case of dimension $1$, the largest linear slice of fractal percolation is a Cantor set of zero Hausdorff dimension. We investigate its size in terms of generalized Hausdorff measures.
\end{abstract}

\maketitle

\section{Introduction}

We consider dyadic fractal percolation on the unit square $[0,1]^2$ defined as follows: Fix a parameter $p\in (0,1)$. We subdivide the unit square in $\R^2$ into $4$  sub-squares of equal size. We retain each such square with probability $p$ and discard it with probability $1-p$, with all the choices independent. For each of the retained squares, we continue inductively in the same fashion,  further subdividing them into $4$ sub-squares of equal size, retaining them with probability $p$ and discarding them otherwise, with all the choices independent of each other and the previous steps. The fractal percolation limit set $A=A^{\text{perc}(p)}$ is the set of points which are kept at each stage of the construction. 

In what follows, we will denote by $\mathcal{D}_n$ the family of closed dyadic sub-squares of the unit square of level $n$, that is
\[
    \mathcal{D}_n:=\left\{2^{-n}(j,i)+[0,2^{-n}]\times[0,2^{-n}]\,:\,0\le i,j\le 2^{n}-1\right\}\,.
\]
Let us also denote by $A_m$ the union of the squares in $\mathcal{D}_m$ that survive the first $m$ steps of the percolation, so that 
\[
    A=\bigcap_{m\in\N}A_m\,.
\]
Figure \ref{fig:perc} shows a realization of $A_7$ corresponding to $p=0.8$.

\begin{figure}
\label{fig:perc}
\begin{center}
    \includegraphics[width=0.7\textwidth]{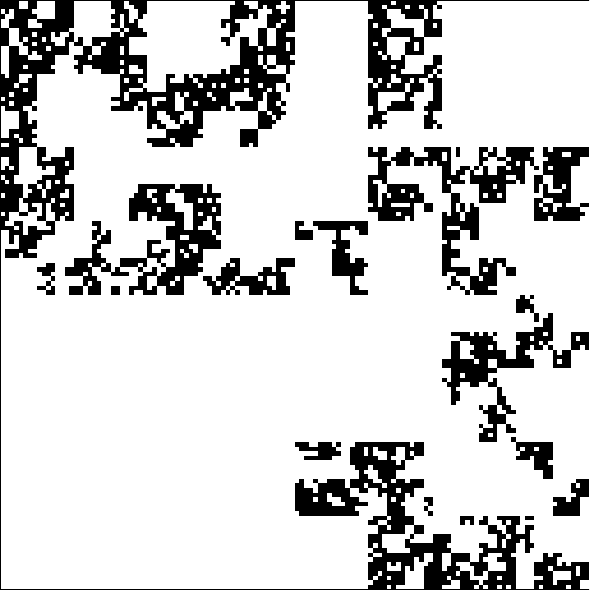}
\end{center}
\caption{A realization of fractal percolation with $p=0.8$.}
\end{figure}

Fractal percolation is one of the most popular models of random fractal sets, and its fractal geometric properties have been studied extensively in the literature, see e.g. \cite{FalconerGrimmett1992, AJJRS2012, BCJ2013, RamsSimon2014,  RamsSimon2014b, Don2015, RamsSimon2015, ShmerkinSuomala2015, PeresRams2016, CORS2017, ShmerkinSuomala2018, ShmerkinSuomala2020, RossiSuomala2020, Troscheit2020, KlattWinter2024}. It is well known that if $p\le \tfrac14$, then $A$ is almost surely empty, and if $\tfrac14<p<1$, there is a positive probability that $A\neq\varnothing$. Moreover, conditional on non-extinction (i.e $A\neq\varnothing$), $A$ is almost surely a fractal set of Hausdorff dimension
\begin{equation}\label{eq:s(d,p)}
\hdim A=2+\log_2 p\,,
\end{equation}
and the $2+\log_2 p$-dimensional Hausdorff measure of $A$ is almost surely zero.
See e.g. \cite[\S 14.3]{LyonsPeres2016} 
(which discusses a more general model).  We consider the following problem: 
\[
    \text{How large is the maximal intersection of }A\text{ with a line?}
\] 
If $p>1/2$, then $\hdim A>1$ and, conditional on non-extinction, there are many lines intersecting $A$ in a set of Hausdorff dimension $s=\hdim A-1$. This may be seen e.g. by considering intersections of $A$ with horisontal or vertical lines passing through $[0,1]^2$ which are fractal percolation on the line.
In terms of dimension, $\hdim A-1=1+\log_2 p$ is also the size of the largest slice since for this class of random sets, the Marstrand's slicing theorem takes a particularly strong form: There is a random variable $M$ such that
\begin{equation}\label{eq:unifrom_box_d}
\sup_{n\in\N,\,\ell\in\text{ lines}}\left|\left\{Q\in\mathcal{D}_n\,:\,\ell\cap Q\cap A_n\neq\varnothing\right\}\right|\le M 2^{sn}\,.
\end{equation}
Although the model there is slightly different, \eqref{eq:unifrom_box_d} follows from (the proof of) \cite[Theorem 3.1]{ShmerkinSuomala2015}. See also  \cite{RamsSimon2014, PeresRams2016, ShmerkinSuomala2018} for many related results. 

In this paper, we investigate the case $p\le 1/2$.  In this case $A$ has length zero (and so the same is true for all orthogonal projections), and therefore almost all lines do not intersect $A$ at all. This is why we look at the largest possible intersection with a line (as opposed to a typical intersection).

For each integer $k\ge 3$, there exists a critical threshold $p_k\in [\tfrac14,\tfrac12]$ such that if $p<p_k$, then almost surely, $A$ does not contain $k$ collinear points, and for $p>p_k$, almost surely conditioned on non-extinction, there is a line that contains $k$ collinear points; see Lemma \ref{lem:phase-transition} below. We are able to find the exact value of $p_k$ and show that at criticality (that is, $p=p_k$) $A$ does not contain $k$ collinear points.
\begin{theorem}  \label{thm:finite-intersections}
    Consider dyadic fractal percolation $A=A^{\text{perc}(p)}$ on $[0,1]^2$. Let $3\le k\in\N$.
\begin{enumerate}[label=\textbf{(\Alph*)},font=\upshape]
		\item \label{A} If $p\le 2^{(-k-2)/k}$, then almost surely, $A$ does not contain $k$ points on a line. 
		\item \label{B} If $p>2^{(-k-2)/k}$, then almost surely on $A\neq\varnothing$, there is a line $\ell$ such that  $|\ell\cap A|\ge k$.
		\end{enumerate}
\end{theorem}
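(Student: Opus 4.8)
The plan is to treat parts \ref{A} and \ref{B} by complementary first- and second-moment arguments, with the exact value $p_k=2^{-(k+2)/k}$ emerging from the identity $2^{k+2}p_k^{\,k}=4(2p_k)^k=1$. The common geometric input I would establish first is a counting estimate: the number of $k$-tuples of cells in $\mathcal{D}_n$ that are met by a single line is $\lesssim 2^{(k+2)n}$. This follows from a degrees-of-freedom count — two cells pin down the line, and the remaining $k-2$ cells are confined to the $O(2^n)$-cell ``tube'' swept by lines through those two — so each contributes one parameter, giving $2^{4n}\cdot 2^{(k-2)n}$. Since $k$ cells with pairwise disjoint ancestry survive with probability exactly $p^{kn}$, the expected number of collinear surviving $k$-tuples is of order $2^{(k+2)n}p^{kn}=(4(2p)^k)^n$, which already locates the threshold; shared ancestry only alters bounded prefactors, not the exponential rate, as the recursion below confirms.

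For part \ref{A} in the strict regime $p<p_k$, the naive first moment above is nonetheless inconclusive, since $k$-tuples clustered inside a single descending cell contribute a runaway factor $(4p)^n$ and need not correspond to genuinely distinct limit points. I would remedy this by a smallest-common-ancestor decomposition: if $A$ contains $k$ collinear points, let $Q_0$ be the smallest dyadic cell containing all of them, so that inside $Q_0$ the points split among the at most three children met by their line. Writing $S_k(m)$ for the expected number of collinear surviving $k$-tuples at relative level $m$ inside a cell that already split at the first subdivision, a self-similar recursion (distributing the $k$ points among the $\le 3$ crossed quadrants and applying the counting estimate to each part) yields $S_k(m)\asymp (2^{k+2}p^k)^m$, which tends to $0$ when $p<p_k$. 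As the event that $A$ has $k$ collinear points forces, for the particular cell $Q_0$, a split collinear $k$-configuration to persist at every finer scale, a countable union over the candidate cells $Q_0$ together with $\liminf_m S_k(m)=0$ gives $\PP(A\text{ contains }k\text{ collinear points})=0$.

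The critical case $p=p_k$ of part \ref{A} is where I expect the main difficulty. Here $S_k(m)$ stays bounded but does not decay, so the first moment is silent — precisely the phenomenon that the $(2+\log_2 p)$-dimensional Hausdorff measure of $A$ is critical yet almost surely zero. I would organize the split configurations as a (near-)critical branching structure, whose individuals at generation $m$ are equivalence classes of collinear surviving $k$-configurations inside $Q_0$ and whose offspring are their extensions to the next level; the per-generation mean is $2^{k+2}p_k^{\,k}=1$. A genuinely critical, non-degenerate branching process dies out almost surely, which would force extinction of all collinear $k$-configurations and hence the absence of $k$ collinear points. The genuine obstacle is that these configurations overlap and their extensions are \emph{not} independent, so the crux is to control the dependencies well enough to import the critical-extinction conclusion — for instance through a second-moment or size-biasing comparison with an honest critical Galton–Watson process.

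For part \ref{B}, where $2^{k+2}p^k>1$, I would run the same machinery in reverse. The branching structure is now supercritical, and a second-moment estimate — with the pairwise correlations between collinear $k$-tuples controlled by the counting lemma of the first paragraph — shows that the number of persistent split collinear $k$-configurations is positive with probability bounded below uniformly in the level; a compactness argument then extracts $k$ distinct limit points of $A$. The delicate point, mirroring the clustering issue in part \ref{A}, is to arrange the construction so that these $k$ points lie on one honest line rather than being merely approximately collinear at every scale, which I would secure by forcing the admissible directions into a shrinking cone so that a common limiting slope exists. Finally, to upgrade ``positive probability'' to ``almost surely on $A\neq\varnothing$'', I would use self-similarity and independence across disjoint subcells: conditioned on non-extinction the surviving tree contains infinitely many independent rescaled copies of the model, each with the same positive probability of internally producing $k$ collinear points, so by Borel–Cantelli (or the $0$–$1$ law for the relevant tail event) almost surely at least one does, and any such configuration furnishes a line $\ell$ with $|\ell\cap A|\ge k$.
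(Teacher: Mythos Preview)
Your overall architecture---first moment for \ref{A}, second moment for \ref{B}, with the threshold emerging from the count $\asymp 2^{(k+2)n}$ of collinear $k$-tuples of level-$n$ cells---matches the paper's. The idea you are missing, however, is to fix at the outset $k$ \emph{disjoint} dyadic squares $R_1,\ldots,R_k\in\mathcal{D}_m$ (for some $m$) through which a common line passes, and to count only configurations $C=Q_1\cup\cdots\cup Q_k$ with $Q_i\in\mathcal{D}_{m+n}$, $Q_i\subset R_i$. This single device dissolves all three of the difficulties you flag.

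Because the $R_i$ are disjoint, the events $Q_i\subset A_{m+n}$ are (conditionally on $R_i\subset A_m$) independent, so $\EE(Y_n)=\Theta(2^{(k+2)n}p^{kn})$ exactly, with no ``runaway $(4p)^n$'' from clustered tuples; your smallest-common-ancestor recursion becomes unnecessary. The critical case $p=p_k$, which you correctly identify as the crux and leave essentially open, is then handled not by a branching-process comparison but by an FKG argument: since $\EE(Y_n)=O(1)$, Markov gives $\PP(0<Y_n\le M)$ bounded below for large $M$, and on $\{Y_n\le M\}$ the probability that every one of the $\le M$ surviving configurations loses some $Q_i$ at the next step is at least $q^M>0$, by FKG applied to the decreasing events ``$Q_i^{j}\cap A_{m+n+1}=\varnothing$ for some $i$''. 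This forces $\PP(Y_n>0)\to 0$. Your proposed route through critical Galton--Watson extinction is doubtful precisely because of the dependence you yourself point to, and you give no mechanism for controlling it. For \ref{B}, the disjointness of the $R_i$ guarantees that any subsequential limit of surviving configurations consists of $k$ \emph{distinct} points on an honest line, so no cone-shrinking is needed; and the upgrade from positive to full probability on non-extinction is via the zero-one law for inherited properties (Lemma~\ref{lem:inherited}) rather than a Borel--Cantelli over subcells.
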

 
Thus, $p_k$ is a strictly increasing sequence on $]\tfrac14,\tfrac12[$ and claims \ref{A} and \ref{B} thus give rise to infinitely many new phase transitions for fractal percolation: For $p\in]p_{k-1},p_k]$, there are lines that contain exactly $j$ points of $A$ for $j=0,\ldots,k-1$ but not for $j\ge k$. In particular, at criticality, that is for $p=p_k$, there are no lines intersecting $k$ points of $A$, see Remark \ref{rem:all_k}.

\begin{remark}
A remark in our previous paper \cite[Remark 6.15]{ShmerkinSuomala2020} claims the correct bound for $p_3$ (in all dimensions), but the proof suggested there is not correct as the set $V$ is claimed to be a plane, but is actually a non-linear surface. Although the paper \cite{ShmerkinSuomala2020} also develops methods for dealing with such non-linear intersections, rather than following this strategy we provide direct and more elementary proofs for the Claims \ref{A} and \ref{B} in Theorem \ref{thm:finite-intersections}.
\end{remark}

Finally, we turn to the case $p=\tfrac12=\lim_{k\to\infty} p_k$. It is relatively easy to see, using Theorem \ref{thm:finite-intersections}\ref{B}, that almost surely on $A\neq\varnothing$, there are lines that contain infinitely many points of $A$. The following result provides a more quantitative statement. Given  a gauge function $\phi$, let $\mathcal{H}^\phi$ denote the associated generalized Hausdorff measure  (see \S\ref{subsec:ghm} for the definition).

\begin{theorem} \label{thm:infinite-intersections}
    Consider dyadic fractal percolation $A=A^{\text{perc}(1/2)}$ on $[0,1]^2$.
\begin{enumerate}[label=\textbf{(\Alph*)},start=3,font=\upshape]
		\item \label{C} Almost surely, 
  \[\sup_{n\in\N,\,\ell\in\text{ lines}}\frac{\left|\left\{Q\in\mathcal{D}_n\,:\,\ell\cap Q\cap A_n\neq\varnothing\right\}\right|}{n^3}<\infty\,.\]
		\item \label{D} Almost surely on $A\neq\emptyset$, there is a  horizontal line $\ell$ with $\mathcal{H}^{\phi}(A\cap\ell)>0$, where 
  \[\phi(r)=\frac{\log\log(1/r)}{\log(1/r)}\,.\]
		\end{enumerate}
\end{theorem}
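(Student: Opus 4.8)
The plan is to handle the two claims separately, using Part~\ref{C} as a warm-up for the substantially harder Part~\ref{D}, and to exploit throughout that at $p=\tfrac12$ we sit exactly at the critical dimension $\hdim A=1$, so that \eqref{eq:unifrom_box_d} degenerates (the exponent $s=0$) and only a polynomial-in-$n$ correction can survive. For Part~\ref{C}, write $Z_n(\ell)=|\{Q\in\mathcal{D}_n:\ell\cap Q\cap A_n\neq\varnothing\}|$ and study the \emph{line tree} of $\ell$: the squares of $\bigcup_j\mathcal{D}_j$ meeting $\ell$, ordered by inclusion. Since $\ell$ meets at most $C2^j$ squares of $\mathcal{D}_j$, each node has at most three children but the width only doubles on average, so percolation with $p=\tfrac12$ makes $Z_j(\ell)$ a process dominated by a \emph{critical} Galton--Watson process of mean offspring $1$. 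Concretely $\EE[Z_n(\ell)]=2^{-n}\,|\{Q\in\mathcal{D}_n:\ell\cap Q\neq\varnothing\}|\le C$, and summing $2^{-(2n-j)}$ over the pairs whose deepest common ancestor sits at level $j$ gives $\EE[Z_n(\ell)^2]\le Cn$. The same Steiner-tree bookkeeping over $m$-tuples yields $\EE[Z_n(\ell)^m]\le m!\,(Cn)^{m-1}$, hence an exponential tail $\PP[Z_n(\ell)>t]\le C\exp(-ct/n)$ uniform in $\ell$. I would then discretise: the number of combinatorially distinct intersection patterns of a line with the level-$n$ grid is only $2^{O(n)}$, so a union bound over these, followed by Borel--Cantelli, makes $\sup_{n,\ell}Z_n(\ell)/n^3$ finite almost surely (a constant multiple of $n^2$ already suffices, so the stated exponent is comfortable).

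For Part~\ref{D} the decisive reduction is to horizontal lines. Writing $y=0.b_1b_2\cdots$ in binary, the digits $b_1,\dots,b_m$ single out one of the $2^m$ rows of $\mathcal{D}_m$, and the surviving squares in that row form the level-$m$ approximation $S_m(y)\subseteq\{0,\dots,2^m-1\}$ of the slice $A\cap\ell_y$, viewed on the $x$-axis. For \emph{fixed} $y$ this is ordinary one-dimensional fractal percolation at $p=\tfrac12$, a critical Galton--Watson process, which dies out almost surely; hence no fixed line works and $y$ must be chosen adaptively. The mechanism is that in passing from level $m$ to $m+1$ we are still free to choose $b_{m+1}\in\{0,1\}$, and each surviving column contributes independent $\mathrm{Bin}(2,\tfrac12)$ offspring counts to the top and to the bottom sub-row. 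Selecting the better half replaces the critical increment by the maximum of two independent sums of mean $|S_m|$ and variance $\tfrac12|S_m|$, producing a strictly positive drift of order $\sqrt{|S_m|}$. This drift carries $|S_m(y)|$ out of the critical death regime and lets it grow polynomially in $m$ along the adaptively chosen height.

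To convert such growth into $\mathcal{H}^\phi(A\cap\ell_y)>0$ I would invoke the mass distribution principle. Since $\phi(2^{-m})\asymp\frac{\log m}{m}$, it suffices to produce a probability measure $\mu$ on the slice with $\mu(I)\lesssim\frac{\log m}{m}$ for every level-$m$ dyadic interval $I$, which amounts to keeping about $m/\log m$ surviving columns \emph{well spread} at each scale. The hard part, and the true source of the gauge, is exactly this balancing: the single digit sequence $(b_m)$ must serve all dyadic $x$-blocks at once, so a purely greedy choice that maximises the total count risks overloading some block, and instead one must choose $b_{m+1}$ so as to grow the count while forbidding any interval from accumulating more than its allotted $\tfrac{\log m}{m}$ share of mass. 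Making growth and spread coexist under this shared-digit constraint is the principal obstacle, and it is what calibrates the attainable gauge to $\phi(r)=\frac{\log\log(1/r)}{\log(1/r)}$ rather than something smaller.

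Finally I would upgrade the resulting positive-probability statement to ``almost surely on $A\neq\varnothing$'' by self-similarity. Each surviving level-$m$ square carries an independent rescaled copy of the whole percolation; a horizontal line through such a square extends to a horizontal line of $[0,1]^2$ whose slice is at least as large; and conditioned on non-extinction there are infinitely many such independent copies. A routine $0$--$1$ argument over these independent trials then forces the existence of a $\phi$-large horizontal slice almost surely on non-extinction.
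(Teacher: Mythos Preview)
Your argument for Part~\ref{C} is correct and follows the same template as the paper (exponential tail for a fixed line, a net of $\exp(O(n))$ lines, union bound, Borel--Cantelli); the paper imports the tail bound from \cite{ShmerkinSuomala2018} while you derive it from moment estimates, and your direct route in fact yields the slightly sharper threshold $n^2$.

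For Part~\ref{D} your approach is genuinely different from the paper's and has a real gap at exactly the point you flag. The paper does not construct the line adaptively. It fixes in advance a family $\mathcal{C}_n$ of discretized Cantor sets lying on \emph{all} horizontal lines simultaneously, each $C\in\mathcal{C}_n$ consisting of exactly $2^n$ intervals at scale $2^{-s_n}$ obtained by strict binary splitting (one child in each half of every construction interval), with the stage lengths $m_n$ chosen via \eqref{eq:m_n} so that $2^n\phi(2^{-s_n})=\Theta(1)$. It then runs a second moment argument on $Y_n=\sum_{C\in\mathcal{C}_n}\textbf{1}[C\subset A_{s_n}]$; the work is the correlation estimate Proposition~\ref{prop:correlation-Cantor}, organised via labelled cutsets of the binary tree. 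Because binary splitting is hard-wired into every candidate $C$, the mass distribution bound is automatic for any survivor, and the gauge $\phi$ is not a heuristic but the output of the calculation: it is precisely the growth rate of $m_n$ at which the correlation sum stays bounded (concretely, the condition $(m_k+1)^{2^k}\le 2^{m_{k-1}}$).

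In your scheme the single bit $b_{m+1}$ is spent on maximising the total count $|S_{m+1}|$, and you are right that this produces a drift of order $\sqrt{|S_m|}$ and hence polynomial growth. But total count does not give $\mathcal{H}^\phi>0$: you need a measure on $\bigcap_m\bigcup S_m$ satisfying $\mu(I)\lesssim\phi(|I|)$ at every dyadic scale, and greedy growth actively works against this, since the larger half is typically the one in which some $x$-block is already overloaded. You name the obstacle but supply no mechanism to overcome it, nor any derivation of why the attainable gauge should be $\phi$ rather than some other sub-power function. With one shared digit per level serving all blocks at once it is far from clear such a mechanism exists; the paper's route avoids the issue entirely by building spread into the candidate sets and then exploiting the joint freedom over lines \emph{and} Cantor patterns through the second moment.
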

Note that Claim \ref{C} implies, in particular, that $\mathcal{H}^\phi(A)<\infty$ for $\phi(r)=\log^{-3}(1/r)$. Thus, the theorem provides the exact size of the largest linear slice of fractal percolation in the critical case $\hdim(A)=1$, up to powers of $\log(1/r)$. 

\begin{remark}
For any $m\in\N$, $m\ge 2$, one can define an $m$-adic version of fractal percolation by considering percolation on the $m$-adic sub-squares of $[0,1]^2$ instead of the dyadic ones, see \cite{RamsSimon2014}. We have assumed $m=2$ for simplicity of notation. All the above results generalize, with only trivial changes in the proofs, to an arbitrary base $m\ge 2$. In particular, for the $m$-adic planar fractal percolation, the threshold for $k$-points on a line is $p_k=m^{(-k-2)/k}$ which corresponds to the value $2-\frac{k+2}{k}$ for the Hausdorff dimension of the limit set $A$. Note that the latter value is independent of $m$.
\end{remark}

\subsection*{Acknowledgement}

We thank Andr\'as M\'ath\'e for useful discussions.

\section{Preliminaries}

\subsection{Notation}

We will use the standard big-$O$ notation: $f(n)=O(g(n))$ means that there is a constant independent of $n$ such that $f(n)\le C g(n)$ for all $n\in \N$. In particular, $f(n)=O(1)$ states that $f(n)$ is bounded. If we want to stress the dependence of the $O$-constant on some other parameters, we may use sub-indices, e.g. $f(n)=O_k (g(n))$ states that $f(n)\le C_k g(n)$ for all $n$, where $0<C_k<\infty$ is independent of $n$ but is allowed to depend on $k$. The notation $f(n)=\Omega(g(n))$ is used to denote $g(n)=O(f(n))$. Finally, $f(n)=\Theta(g(n))$, if $f(n)=O(g(n))=O(f(n))$. We let $|S|$ denote the cardinality of a set $S$.

In our arguments, we will consider the fractal percolation for a fixed parameter $\tfrac14<p\le\tfrac12$ and the induced probability $\PP$, a priori defined on $\{0,1\}^{\cup_{n\in\N}\mathcal{D}_n}$, but it may also be thought as a probability measure on the space of compact subsets of $[0,1]^2$. We denote by $\mathcal{B}_n$ the sigma-algebra generated by all possible choices of $A_k$, $0\le k\le n$, and given an event $\mathcal{E}$, let $\PP_\mathcal{E}$ and $\EE_\mathcal{E}$ denote the probability and expectation conditional on $\mathcal{E}$. If $Q,Q'\subset[0,1]$ are two dyadic squares, we denote by $I(Q,Q')$ the largest integer $n\ge 0$ such that $Q$ and $Q'$ are subsets of the same level $n$ dyadic square. 

\subsection{Generalised Hausdorff measures}
\label{subsec:ghm}

Let $\phi\colon[0,\infty[\to[0,\infty[$ be a non-decreasing function with $\phi(0)=0$. Such functions are known as \emph{gauge functions} or \emph{dimension functions}. The generalized Hausdorff measure $\mathcal{H}^\phi$ is defined as 
\[
\mathcal{H}^\phi(A)=\lim_{\delta\to 0} \mathcal{H}_{\delta}^{\phi}(A) = \sup_{\delta>0} \mathcal{H}_{\delta}^{\phi}(A)\,,
\]
where
\[ 
	\mathcal{H}_{\delta}^{\phi}(A) = \inf\left\{\sum_{i=1}^\infty\phi(\diam E_i)\,:\,A\subset\bigcup_{i=1}^\infty E_i\,,\diam E_i\le\delta\right\}\,.
\]

\subsection{The FKG inequality and zero-one law for inherited properties}

We recall the following version of the classical FKG inequality for later use, see \cite[Section 5.8]{LyonsPeres2016}.  In our context, an event $\mathcal{E}$ is decreasing if $\textbf{1}_{\mathcal{E}}(A)=1$ implies $\textbf{1}_{\mathcal{E}}(A')=1$ whenever $A,A'\subset[0,1]^2$ are compact and $A'\subset A$. 
\begin{lemma}\label{lem:FKG}
	If $\mathcal{E}_1,\ldots\mathcal{E}_N$ are decreasing events, then $\PP\left(\bigcap_{j=1}^N\mathcal{E}_j\right)\ge\prod_{j=1}^N\PP(\mathcal{E}_j)$.	
\end{lemma}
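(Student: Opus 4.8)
The plan is to recognize this as Harris's inequality (the special case of FKG for a product of Bernoulli measures) and reduce it to the correlation of two monotone functions. Realize $\PP$ as the product measure $\mu=\bigotimes_{Q}\mathrm{Bernoulli}(p)$ on $\Omega=\{0,1\}^{\mathcal{T}}$, where $\mathcal{T}=\bigcup_n\mathcal{D}_n$ and the coordinate $\omega_Q$ records whether $Q$ is selected at its own level (independently of its ancestors). Since adjoining selections only enlarges each $A_m$, the map $\omega\mapsto A(\omega)$ is coordinatewise nondecreasing, so a decreasing event in the sense of the Lemma pulls back to a set $\mathcal{E}\subset\Omega$ that is decreasing in the product order; equivalently $f:=\mathbf{1}_{\mathcal{E}}$ is a nonincreasing function of $\omega$. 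Because a finite intersection of decreasing events is again decreasing, it suffices by induction on $N$ to prove the two-event statement $\EE[fg]\ge\EE[f]\EE[g]$ for nonincreasing indicators $f,g$; I will in fact prove this for arbitrary bounded nonincreasing $f,g$.

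First I would dispatch the single-coordinate case. If $f,g\colon\{0,1\}\to\R$ are both nonincreasing, then $(f(x)-f(y))(g(x)-g(y))\ge 0$ for all $x,y$, and averaging this over independent $x,y$ distributed as $\mathrm{Bernoulli}(p)$ gives $\EE[fg]-\EE[f]\EE[g]\ge 0$. I then upgrade to finitely many coordinates $Q_1,\dots,Q_m$ by induction: writing $\omega'=(\omega_{Q_1},\dots,\omega_{Q_{m-1}})$ and integrating out the last coordinate, the functions $F(\omega'):=\EE[f\mid\omega']$ and $G(\omega'):=\EE[g\mid\omega']$ are still nonincreasing (an increase of $\omega'$ decreases the integrand pointwise), the one-coordinate inequality applied conditionally on $\omega'$ yields $\EE[fg\mid\omega']\ge F(\omega')G(\omega')$, and the inductive hypothesis on $m-1$ coordinates gives $\EE[FG]\ge\EE[F]\EE[G]=\EE[f]\EE[g]$.

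Finally I would pass to the full (infinite) product by a martingale argument, which I expect to be the only genuinely nontrivial point. Put $f_n:=\EE[f\mid\mathcal{B}_n]$ and $g_n:=\EE[g\mid\mathcal{B}_n]$. Since conditioning on $\mathcal{B}_n$ amounts to integrating out the levels beyond $n$, which are independent of the first $n$ levels, the same pointwise-monotonicity observation as above shows that $f_n,g_n$ are nonincreasing functions of the first $n$ levels; hence the finite-dimensional case gives $\EE[f_n g_n]\ge\EE[f_n]\EE[g_n]=\EE[f]\EE[g]$, the last equality by the tower property. The martingales $(f_n)$, $(g_n)$ are bounded in $[0,1]$ and converge a.s. and in $L^2$ to $f,g$ (as $f,g$ are measurable with respect to $\sigma(\bigcup_n\mathcal{B}_n)$); boundedness then forces $f_n g_n\to fg$ in $L^1$, so $\EE[f_n g_n]\to\EE[fg]$, and letting $n\to\infty$ yields $\EE[fg]\ge\EE[f]\EE[g]$. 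The main obstacle is precisely this limiting step: one must ensure that conditioning preserves monotonicity (so that the finite-dimensional inequality applies to the approximants) and that the $L^2$ martingale convergence legitimately transfers the inequality to the limit; everything else is the classical two-point correlation computation. Combining this with the induction on $N$ completes the proof.
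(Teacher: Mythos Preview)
The paper does not supply a proof of this lemma; it simply records it as the classical FKG inequality and refers to \cite[Section 5.8]{LyonsPeres2016}. Your argument is a correct, self-contained derivation of Harris's inequality for the product Bernoulli measure: the two-point covariance identity, induction on the number of coordinates, and the $L^2$-martingale passage to the infinite product are all standard and correctly executed. One small notational remark: the filtration you call $\mathcal{B}_n$ (all coordinates $\omega_Q$ with $Q$ of level at most $n$) is strictly finer than the paper's $\mathcal{B}_n$ (generated by $A_0,\ldots,A_n$), since the latter forgets $\omega_Q$ once an ancestor of $Q$ has died; but your argument only requires some exhausting sequence of finite-coordinate $\sigma$-algebras, so this does not affect the proof.
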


We call a property $P(X)$ of compact subsets $X$ of $[0,1]^2$ \emph{inherited} if the empty set satisfies $P$ and if $X$ satisfies $P$ then $X^Q$ satisfies $P$ for all $Q\in\mathcal{D}_n$ and $n\in\N$, where $X^Q= T_Q(X\cap Q)$, with $T_Q$ denoting the homothety that maps $Q$ to $[0,1]^2$. We have the following simple zero-one law for inherited properties; see \cite[Proposition 5.6]{LyonsPeres2016} (which is proved for Galton-Watson trees; the claim as stated below follows since fractal percolation is a geometric projection of a Galton-Watson tree as explained in \cite[\S 5.7]{LyonsPeres2016}).
\begin{lemma} \label{lem:inherited}
 Let $P$ be an inherited property and let $A$ be the fractal percolation limit set. Conditioned on $A\neq\emptyset$, $\PP(P(A))\in\{0,1\}$. 
\end{lemma}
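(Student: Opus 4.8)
The plan is to exploit the self-similar branching structure of the model and reduce the statement to an analysis of the fixed points of the offspring generating function $g(s)=(ps+1-p)^4$, which is the probability generating function of the number $Z\sim\mathrm{Bin}(4,p)$ of retained level-$1$ squares. Throughout I write $\theta=\PP(A\neq\emptyset)$, which is positive since $p>1/4$, and set $a=\PP(P(A))$ and $q=\PP(P(A)\mid A\neq\emptyset)$; the goal is to show $q\in\{0,1\}$. Because the empty set satisfies $P$, conditioning on whether $A$ survives gives the identity $a=q\theta+(1-\theta)$, so it suffices to prove $a\in\{1-\theta,1\}$.

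First I would record the branching decomposition. Let $Q_1,\dots,Q_4$ be the four level-$1$ dyadic squares and $A^{Q_i}=T_{Q_i}(A\cap Q_i)$ the rescaled trace of $A$ in each. By the inductive construction, conditionally on the retention pattern of these squares, the pieces $A^{Q_i}$ coming from retained squares are independent copies of $A$, while $A^{Q_i}=\emptyset$ for discarded squares. The key step then uses the inheritance hypothesis in one direction only: if $P(A)$ holds, then taking $Q=Q_i$ with $n=1$ shows each $A^{Q_i}$ satisfies $P$, and for a discarded square $A^{Q_i}=\emptyset$ satisfies $P$ as well. Hence $\{P(A)\}\subseteq\{P(A^{Q_i})\text{ for all }i\}$. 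Conditioning on the retention pattern and using the independence and identical distribution of the retained pieces, the probability of the right-hand event equals $\sum_{n=0}^{4}\binom{4}{n}p^n(1-p)^{4-n}a^n=g(a)$, which yields the inequality $a\le g(a)$.

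Finally I would analyze $g$ on $[0,1]$. It is convex and increasing with $g(1)=1$ and $g'(1)=4p>1$, so it has exactly two fixed points there: the extinction probability $\eta=1-\theta<1$ and the point $1$, with $g(s)<s$ for $s\in(\eta,1)$. Since $a=q\theta+(1-\theta)\in[\eta,1]$, the inequality $g(a)\ge a$ excludes $a\in(\eta,1)$ and forces $a\in\{\eta,1\}$. Translating back through $a=q\theta+(1-\theta)$ with $\theta>0$ gives $q=0$ when $a=\eta$ and $q=1$ when $a=1$, which is the claim.

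I expect the main obstacle to be the rigorous justification of the conditional independence and self-similarity of the pieces $A^{Q_i}$ (the branching property of fractal percolation) together with the careful bookkeeping that discarded or empty squares contribute harmlessly precisely because $P(\emptyset)$ holds; once the containment $\{P(A)\}\subseteq\{P(A^{Q_i})\text{ for all }i\}$ and the resulting inequality $a\le g(a)$ are secured, the fixed-point analysis is routine.
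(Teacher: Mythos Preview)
Your argument is correct and is precisely the standard proof of this zero--one law. The paper does not actually prove the lemma; it simply quotes \cite[Proposition~5.6]{LyonsPeres2016} and remarks that fractal percolation is a geometric projection of a Galton--Watson tree. Your write-up reproduces that same argument in the fractal-percolation setting: the containment $\{P(A)\}\subseteq\bigcap_i\{P(A^{Q_i})\}$ gives $a\le g(a)$ for the offspring generating function $g(s)=(ps+1-p)^4$, and the convex fixed-point analysis forces $a\in\{1-\theta,1\}$. So your approach coincides with the paper's (cited) proof; nothing is missing.
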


As an immediate consequence we have:
\begin{lemma} \label{lem:phase-transition}
Fix $k\in\N$, $k \ge 3$. For each choice of $p$, either almost surely $A$ does not contain $k$ collinear points, or almost surely on $A\neq\emptyset$, there is a line containing $\ge k$ points of $A$.

Moreover, there exists a critical threshold $p_k\in [\tfrac14,\tfrac12]$ such that if $p<p_k$, then almost surely, $A$ does not contain $k$ collinear points, and for $p>p_k$, almost surely conditioned on non-extinction, there is a line that contains $k$ collinear points.
\end{lemma}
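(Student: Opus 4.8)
The plan is to derive the first (dichotomy) statement from the zero-one law of Lemma \ref{lem:inherited}, and then upgrade it to a genuine phase transition by means of a monotone coupling in $p$. First I would consider the property $P(X)$: ``$X$ does not contain $k$ collinear points''. The empty set satisfies $P$ vacuously, and if $X$ satisfies $P$ then so does every rescaled piece $X^Q=T_Q(X\cap Q)$, because $X\cap Q\subset X$ contains no $k$ collinear points and the homothety $T_Q$ preserves collinearity, so $X^Q$ contains none either. Hence $P$ is inherited. For $p\le\tfrac14$ the set $A$ is a.s.\ empty and the first alternative holds trivially; for $p>\tfrac14$ we have $\PP(A\neq\emptyset)>0$, so Lemma \ref{lem:inherited} gives $\PP(P(A)\mid A\neq\emptyset)\in\{0,1\}$. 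A value of $1$ yields, together with the extinction case, that $A$ a.s.\ contains no $k$ collinear points, while a value of $0$ yields that a.s.\ on $A\neq\emptyset$ some line meets $A$ in $\ge k$ points. This is precisely the claimed dichotomy.

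Next I would set up the standard monotone coupling: attach to each dyadic square $Q$ an independent uniform variable $U_Q\in[0,1]$, and declare $Q$ retained in the $p$-percolation exactly when $U_Q\le p$ and all dyadic ancestors of $Q$ are retained. This realises all limit sets $A^{(p)}$ simultaneously with $A^{(p)}\subset A^{(p')}$ whenever $p\le p'$. Writing $E_p$ for the event that $A^{(p)}$ contains $k$ collinear points and $g(p)=\PP(E_p)$, the inclusion gives $\mathbf{1}_{E_p}\le\mathbf{1}_{E_{p'}}$ in the coupling, so $g$ is non-decreasing. Since $E_p$ forces $A^{(p)}\neq\emptyset$, we have $g(p)=\PP(E_p\mid A^{(p)}\neq\emptyset)\,\PP(A^{(p)}\neq\emptyset)$, so by the first part, for $p>\tfrac14$ either $g(p)=0$ (no $k$ collinear points a.s.) or $g(p)>0$ and $\PP(E_p\mid A\neq\emptyset)=1$. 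I would then define $p_k=\sup\{p:g(p)=0\}$; monotonicity makes $\{p:g(p)=0\}$ a down-set, so $g(p)=0$ for all $p<p_k$ and $g(p)>0$ for all $p>p_k$, which combined with the previous sentence is exactly the asserted threshold behaviour.

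It remains to locate $p_k$ in $[\tfrac14,\tfrac12]$, where the only real work lies. The lower bound $p_k\ge\tfrac14$ is immediate, since $g(p)=0$ for every $p\le\tfrac14$. For the upper bound $p_k\le\tfrac12$ I would show $g(p)>0$ whenever $p>\tfrac12$ by restricting attention to a boundary line: the trace of $A$ on the bottom edge $[0,1]\times\{0\}$ is controlled by the nested bottom-row dyadic squares (those with $i=0$), whose retentions are independent, so this trace is exactly a one-dimensional fractal percolation with parameter $p$. For $p>\tfrac12$ that one-dimensional percolation is non-degenerate and a.s.\ infinite on non-extinction, hence contains $\ge k$ points with positive probability, giving $g(p)>0$. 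The main obstacle is therefore not any single deep step but the bookkeeping that links the \emph{conditional} zero-one law of Lemma \ref{lem:inherited} to the \emph{unconditional} monotone event $E_p$, together with the check that the boundary slice is genuinely a one-dimensional percolation so that the classical $p=\tfrac12$ branching threshold can be invoked at the right endpoint.
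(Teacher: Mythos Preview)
Your proof is correct and, for the first (dichotomy) claim, identical to the paper's: both verify that ``no $k$ collinear points'' is inherited and invoke Lemma~\ref{lem:inherited}.

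For the second (threshold) claim the paper is much terser than you are: it simply says ``apply the first claim to a countable dense subset of $p\in[\tfrac14,\tfrac12]$,'' leaving the underlying monotonicity in $p$ and the location of the endpoints implicit. Your explicit monotone coupling via uniform labels $U_Q$ is the standard way to justify that monotonicity, and your boundary-edge argument for $p_k\le\tfrac12$ (reading off a one-dimensional base-$2$ percolation on the bottom row) is a clean self-contained substitute for the paper's appeal, elsewhere in the introduction, to the fact that for $p>\tfrac12$ horizontal slices already have positive dimension. So your route and the paper's agree in spirit, but you fill in two steps the paper leaves to the reader; nothing in your argument is superfluous or mistaken.
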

\begin{proof}
The property of \emph{not} containing $k$ collinear points is inherited, and so the first claim is immediate from Lemma \ref{lem:inherited}. The second claim follows from applying the first to a countable dense subset of $p\in [\tfrac14, \tfrac12]$.
\end{proof}

\section{Proof of Theorem \ref{thm:finite-intersections}}

\subsection{Discretization}\label{sec:disc}

We first reduce the problem of estimating $p_k$ to a discretized version which is easier to analyse. To that end, let us fix $m\in\N$ and $k$ disjoint squares $R_1,\ldots, R_k\in\mathcal{D}_m$ such that some line passes through the interior of all these squares. We emphasize that the $R_i$ are disjoint, not just non-overlapping, and hence since they are closed they are at distance $\ge 2^{-m}$ from each other.
 
 For each $n\in\N$, we will be considering level $m+n$ squares $Q_1,\ldots,Q_k\in\mathcal{D}_{n+m}$ such that $Q_i\subset R_i$ for each $i=1,\ldots,k$. Let 
 \[\mathcal{C}_n=\left\{\bigcup_{i=1}^k Q_i\,:\,Q_i\in\mathcal{D}_{n+m},Q_i\subset R_i,\exists\text{ a line }\ell\,,\ell\cap Q_i\neq\varnothing\,\forall i=1,\ldots,k\right\}\,.\]
Thus, each $C\in\mathcal{C}_n$ is a union of $k$ squares of level $m+n$, one from each of the initial squares $R_i$, and there is a line that intersects all the squares $Q_i$.
In what follows, when we write $C=Q_1\cup\ldots\cup Q_k\in\mathcal{C}_n$, we implicitly assume that $Q_i\in\mathcal{D}_{n+m}$ and that $Q_i\subset R_i$.

\begin{figure}[h]
\label{fig.C_n}
\begin{center}
\begin{tikzpicture}
\draw [color = gray] (0,0) rectangle (8,8);
\draw [thick] (0,0) rectangle (2,2);
\draw [thick] (6,6) rectangle (8,8);
\draw [thick] (2,4) rectangle (4,6);
\filldraw (0.5,1) rectangle (1,1.5);
\filldraw (7,7.5) rectangle (7.5,8);
\filldraw (3.5,4) rectangle (4,4.5);
\filldraw [fill = lightgray] (0,1.5) rectangle (0.5, 2);
\filldraw [fill = lightgray] (3,4) rectangle (3.5, 4.5);
\filldraw [fill = lightgray] (6.5,6.5) rectangle (7, 7);
\node[] at (1,0.7) {$R_1$};
\node[] at (3,5) {$R_2$};
\node[] at (7.4,6.8) {$R_3$};
\draw [dashed] (0,0.7) -- (7.3,8);
\draw [dashed] (0,1.7) -- (8,7.8);
\end{tikzpicture}
\end{center}
\caption{A possible selection of $R_i$ for $k=3$ and $m=2$. The unions of the squares of the same colour form two elements of $\mathcal{C}_2$.}
\end{figure}
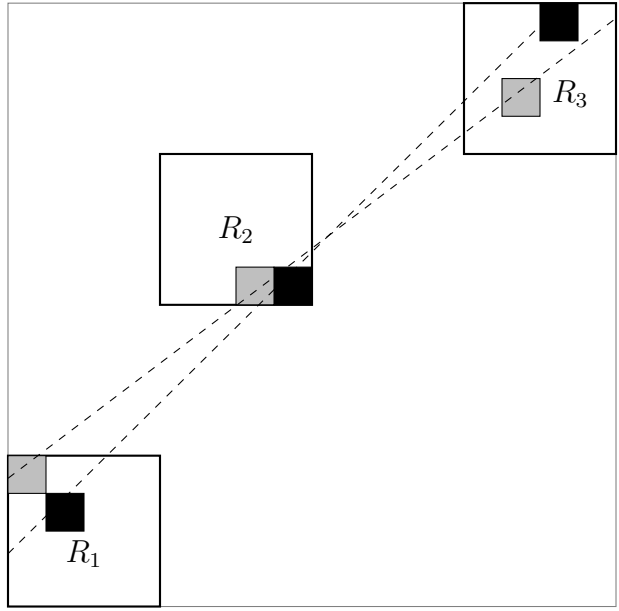

We will investigate the sequence of random variables
\begin{equation}
Y_n=\sum_{C\in\mathcal{C}_n}\textbf{1}\left[C\subset A_{m+n}\right]\,.
\end{equation}

Theorem \ref{thm:finite-intersections} will follow from the following discretized version of the statement.

\begin{proposition} \label{prop:discretized_statement}
	\begin{equation}\label{eq:discretized_statement}
		\PP(Y_n=0\text{ for some }n)=\begin{cases}1&\text{ if }p\le 2^{(-k-2)/2}\,,\\
		<1&\text{ if }p>2^{(-k-2)/2}\,.
		\end{cases}
		\end{equation}			
\end{proposition}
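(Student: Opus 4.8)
My plan is to recast the statement in terms of the persistence probability $u_\infty:=\PP(Y_n\ge 1\text{ for all }n)$. Since a line meeting the four-way children $Q_i'$ of the squares of a collinear configuration also meets their parents $Q_i$, the events $\{Y_n\ge 1\}$ are nested and decreasing; hence $u_\infty=\lim_n\PP(Y_n\ge 1)$ and $\PP(Y_n=0\text{ for some }n)=1-u_\infty$. So it is enough to show $u_\infty=0$ when $p\le p_k$ and $u_\infty>0$ when $p>p_k$, where (consistently with Theorem \ref{thm:finite-intersections}, so that I read the threshold in \eqref{eq:discretized_statement} as $2^{(-k-2)/k}$ rather than $2^{(-k-2)/2}$) I set $p_k:=2^{(-k-2)/k}$.

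The first ingredient is the first moment. Conditioning on the survival of $R_1,\dots,R_k$ to level $m$ — an event of fixed probability $q_0\in(0,1)$ — the $k$ subtrees rooted at the $R_i$ evolve independently, so $\PP(C\subset A_{m+n})=q_0\,p^{kn}$ for every $C\in\mathcal{C}_n$, and therefore $\EE Y_n=q_0\,p^{kn}\,|\mathcal{C}_n|$. I would then establish the combinatorial estimate $|\mathcal{C}_n|=\Theta_m(2^{(k+2)n})$. For the upper bound, fix the two extreme cells $Q_1,Q_k$ ($4^n$ choices each); all lines through $Q_1$ and $Q_k$ form a pencil whose transverse width at any intermediate $R_i$ is $O(2^{-m-n})$ by convexity, so each intermediate $Q_i$ is confined to the $O(2^n)$ cells met by the pencil, giving $|\mathcal{C}_n|\le 4^n\cdot 4^n\cdot O(2^n)^{k-2}$. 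For the matching lower bound, every pair $(Q_1,Q_k)$ is admissible (the segment through the centres crosses each $R_i$, again by convexity) and that single segment already meets $\Theta(2^n)$ cells in each intermediate square, producing $\Theta(2^n)^{k-2}$ valid completions per pair. Hence $\EE Y_n=\Theta_m\!\big(\rho^{\,n}\big)$ with $\rho:=2^{k+2}p^k$, and $\rho<1$, $\rho=1$, $\rho>1$ precisely according to $p<p_k$, $p=p_k$, $p>p_k$.

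The sub- and supercritical regimes then follow from the first two moments. If $p<p_k$ then $\EE Y_n\to 0$, so $u_\infty=\lim_n\PP(Y_n\ge 1)\le\lim_n\EE Y_n=0$. If $p>p_k$ I would run a second moment argument: writing $J_i=I(Q_i,Q_i')-m$ for the coalescence depth inside $R_i$ of two configurations, independence of subtrees gives $\PP(C,C'\subset A_{m+n})=q_0\,p^{\,2kn-\sum_i J_i}$, so $\EE[Y_n^2]=q_0\,p^{2kn}\sum_{C,C'}\prod_{i=1}^{k}p^{-J_i}$. Grouping the pairs by their common coalescence scale $j$ and using collinearity to bound by $O(2^{(k+2)(n-j)})$ the number of configurations within depth $j$ of a fixed $C$, the scale-$j$ contribution to $\EE[Y_n^2]/(\EE Y_n)^2$ is $O(\rho^{-j})$; summing the geometric series over $j$ yields $\EE[Y_n^2]=O((\EE Y_n)^2)$ when $\rho>1$. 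Paley--Zygmund then gives $\PP(Y_n\ge 1)\ge c>0$ uniformly in $n$, and by the monotonicity above $u_\infty\ge c>0$.

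The main obstacle is the critical case $p=p_k$, where $\rho=1$ and $\EE Y_n=\Theta_m(1)$: the first moment no longer vanishes, and the very same second moment computation now gives $\EE[Y_n^2]=\Theta(n)$ (the geometric sum degenerates into $\sum_{j\le n}1$), so Paley--Zygmund only produces $\PP(Y_n\ge 1)\gtrsim 1/n$, which is the wrong direction. This $\Theta(n)$ growth is exactly the signature of a critical branching system, and I expect $(Y_n)$ to die out for the same reason a critical Galton--Watson process does. To make this rigorous I would exploit the branching recursion $\EE[Y_{n+1}\mid\mathcal{B}_{m+n}]=p^k\sum_{C\subset A_{m+n}}g(C)$, where $g(C)$ counts the collinear configurations among the children of $C$; the difficulty is that $g(C)$ is \emph{not} constant (so $Y_n$ itself is not a martingale), its value depending on how the witnessing line meets the $k$ squares at the finest scale. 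I would pass to the resulting compact space of ``types'' on which the mean operator has Perron value $\rho=1$, use a positive bounded harmonic weight $w$ to turn $W_n=\sum_{C\subset A_{m+n}}w(C)$ into a non-negative martingale (hence a.s.\ convergent), and then argue that a surviving configuration always contributes conditional variance bounded below, so persistence would force infinitely many such contributions and thereby a limit that cannot be a finite martingale limit with a positive value — forcing $W_\infty=0$ and hence $u_\infty=0$. Setting up the type space and the eigenfunction $w$, and securing the uniform variance lower bound while controlling the unbounded martingale increments (e.g.\ by truncation), is the delicate heart of the proof.
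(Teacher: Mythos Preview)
Your approach to the sub- and supercritical regimes via first and second moments is indeed what the paper does, though your second-moment sketch is too compressed: a single ``coalescence scale $j$'' does not simultaneously control the count and the probability gain. The paper parametrizes pairs $(C,C')$ by $(s,l,r)=(d^1,d^2,\sum_{i\ge 3}d^i)$, where $d^1\ge\dots\ge d^k$ are the sorted depths $J_i$; the point is that the two \emph{largest} depths each save a factor $2^{-2d}$ in the count (those two squares are unconstrained by collinearity), while each of the remaining $k-2$ depths saves only $2^{-d}$. Summing $2^{-2s-2l-r}p^{-s-l-r}$ over this profile, under the harmless auxiliary assumption $p<2^{-k/(k-1)}$, telescopes to $\sum_s(2^{k+2}p^k)^{-s}=\sum_s\rho^{-s}$---your claimed geometric series---but neither $j=\min_i J_i$ nor $j=\max_i J_i$ gets there in one step.

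The real divergence is at criticality. Your proposed route through a compact type space, a Perron eigenfunction $w$, and a martingale $W_n$ is both far more elaborate than needed and faces a genuine obstacle: the ``type'' of a configuration $C$ (how the witnessing line meets the $k$ squares) lives in a continuum, so the mean operator acts on an infinite-dimensional space and the existence of a bounded positive harmonic weight is not automatic---nor is the uniform variance lower bound you would then need. The paper sidesteps all of this with a short argument that treats the subcritical and critical cases together. Since $\EE Y_n=O(1)$ whenever $p\le p_k$, Markov gives $\PP(Y_n\ge M)\le C/M$ uniformly in $n$. Conditional on $\mathcal{B}_{n+m}$, for each surviving $C=Q_1\cup\dots\cup Q_k$ the event ``some $Q_i$ loses all four children'' is decreasing and has probability $q=1-(1-(1-p)^4)^k>0$; by FKG (Lemma~\ref{lem:FKG}) these events over the $Y_n$ survivors are positively correlated, so $\PP(Y_{n+1}=0\mid\mathcal{B}_{n+m})\ge q^{Y_n}$. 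Hence if $\PP(Y_n>0)>\eta$ for all $n$, choosing $M$ with $\PP(0<Y_n\le M)\ge\eta/2$ yields $\PP(Y_{n+1}=0\mid Y_n>0)\ge(\eta/2)q^M$ for every $n$, contradicting $\prod_n\PP(Y_{n+1}>0\mid Y_n>0)>0$. This is the missing idea: bounded first moment plus FKG already forces extinction, with no martingale or eigenfunction in sight.
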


We now show how this implies Theorem \ref{thm:finite-intersections}, and defer the proof of the Proposition to \S \ref{subsec:A} and \S \ref{subsec:B}.

\begin{proof}[Proof of Theorem \ref{thm:finite-intersections} assuming Proposition \ref{prop:discretized_statement}]

We first show Claim \ref{A}. Suppose $p\le 2^{(-k-2)/k}$. There are only countably many possible choices for the initial rectangles $R_1,\ldots,R_k$, and for any $k$ collinear points $x_1,\ldots, x_n\in A$ there are such $R_1,\ldots, R_k$ for which $x_i\in R_i $, implying that $Y_n\ge 1$ for all $n$.

Now, we prove Claim \ref{B}. If $Y_n\neq 0$ for all $n$, then for each $n$, there is a line $\ell_n$ and points $x_{n,1},\ldots,x_{n,k}\in\ell\cap A_{m+n}$ with mutual distances at least $2^{-n}$. Using  compactness,  this implies that some line $\ell$ hits (at least) $k$ points of $A$. By Lemma \ref{lem:phase-transition}, positive probability implies full probability on non-extinction.
\end{proof}

We stress that from now on, $m\in\N$, $3\le k\in\N$, and the squares $R_1,\ldots,R_k$ are fixed in such a way that some line $\ell$ passes through the interior of all the squares $R_i$. We condition on the event that all $R_i$ are in $A_m$; note this does not affect the claim of Proposition \ref{prop:discretized_statement}. More precisely, we denote $\PP=\PP_{R_1,\ldots,R_k\subset A_m}$ and $\EE=\EE_{R_1,\ldots, R_k\subset A_m}$. Note that the sets $A_{m+n}\cap R_j$, $j=1,\ldots, k$, are now mutually independent for all $n\ge 1$. 

The following counting lemma will be useful later.
\begin{lemma}\label{lem:card_C_n}
$|\mathcal{C}_n|=\Theta(2^{(k+2)n})$.
\end{lemma}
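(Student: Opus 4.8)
The plan is to prove matching upper and lower bounds, both resting on one elementary convexity principle: if a line $y=ax+b$ meets the abscissae $\xi_1<\cdots<\xi_k$ at heights $h_i=a\xi_i+b$, then for $\xi_i$ between $\xi_1$ and $\xi_k$ the height $h_i$ is the corresponding convex combination of $h_1$ and $h_k$, so $h_i$ is determined by $h_1,h_k$ up to the width of the two endpoint targets. Before using this I would normalize the geometry. Applying a symmetry of $[0,1]^2$ (which preserves both $|\mathcal{C}_n|$ and the law of the percolation), I may assume the reference transversal $\ell_0$ through the interiors of $R_1,\dots,R_k$ has slope of absolute value $\le 1$. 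A short argument then shows that any two \emph{disjoint} squares met by such a line lie in distinct dyadic columns (two squares sharing a column are separated vertically by $\ge 2^{-m}$, which would force slope $>1$); hence, after relabelling, $x_1<\cdots<x_k$, with $R_1,R_k$ the extreme squares and each intermediate $R_i$ lying strictly between them horizontally. I identify each level-$(m+n)$ subsquare of $R_i$ with a (column, row) pair in a $2^n\times 2^n$ grid, and record that, since $\ell_0$ meets the open interior of each $R_i$, it does so at some positive depth $\eta 2^{-m}$, with $\eta>0$ a constant independent of $n$.

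For the upper bound I first fix the endpoint subsquares $Q_1\subset R_1$ and $Q_k\subset R_k$, which gives at most $2^{2n}$ choices each, i.e. $2^{4n}$ in total. Any line realising a configuration with these endpoints passes through $Q_1$ and $Q_k$, so by the interpolation principle its height over each column of an intermediate $R_i$ is confined to an interval of length $O(2^{-(m+n)})$; such an interval meets $O(1)$ rows, so the line can meet only $O(2^n)$ subsquares of each intermediate $R_i$. Choosing one subsquare per intermediate square — ignoring the coupling coming from the common transversal, which only overcounts — bounds the number of configurations with given endpoints by $O(2^{(k-2)n})$, whence $|\mathcal{C}_n|=O(2^{4n}\cdot 2^{(k-2)n})=O(2^{(k+2)n})$.

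For the lower bound I construct genuinely valid configurations. I take $Q_1$ and $Q_k$ among the $\Omega(2^{2n})$ subsquares whose centres lie within $\tfrac\eta2 2^{-m}$ of $\ell_0$. By the interpolation principle the segment joining the chosen centres stays within $\tfrac\eta2 2^{-m}$ of $\ell_0$, so it meets each intermediate $R_i$ at depth $\ge\tfrac\eta2 2^{-m}$; a chord of a square of side $2^{-m}$ passing through its $\tfrac\eta2 2^{-m}$-interior with slope $\le 1$ has horizontal extent $\Omega(2^{-m})$, hence crosses $\Omega(2^n)$ columns and meets $\Omega(2^n)$ subsquares of $R_i$. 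The decisive point is that this \emph{single} segment is a common transversal for $Q_1$, $Q_k$ and every subsquare it meets, so I may pick, independently in each intermediate $R_i$, any one of its $\Omega(2^n)$ meeting subsquares and still obtain an element of $\mathcal{C}_n$. Since the tuple records all chosen subsquares, distinct choices give distinct configurations, producing $\Omega(2^{2n})\cdot\Omega(2^{2n})\cdot\Omega(2^n)^{k-2}=\Omega(2^{(k+2)n})$ of them.

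The main obstacle is the lower bound: I must guarantee a genuine common transversal, not merely $k$ subsquares that are approximately collinear, while still harvesting the full $2^{(k+2)n}$ configurations. This is what the ``single segment hits $\Omega(2^n)$ subsquares per intermediate square'' observation achieves, as it decouples the intermediate choices while keeping them exactly collinear; and the positive-depth constant $\eta$ is precisely what makes each per-square count $\Omega(2^n)$ robust to the transversal merely grazing a corner of some $R_i$.
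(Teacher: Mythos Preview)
Your proof is correct and follows essentially the same approach as the paper: for both bounds one fixes two of the sub-squares, observes that any admissible transversal then lies in an $O(2^{-n})$-tube, and counts $\Theta(2^n)$ choices in each of the remaining $k-2$ squares. The only cosmetic difference is your preliminary normalization (slope $\le 1$, $R_i$ in distinct columns) so that you can take the extreme squares $Q_1,Q_k$ and phrase the tube bound as an interpolation estimate; the paper instead fixes arbitrary $Q_1,Q_2$ and uses directly that any line through two $2^{-(m+n)}$-squares at mutual distance $\ge 2^{-m}$ stays in an $O(2^{-n})$-tube over all of $[0,1]^2$, which makes the ordering step unnecessary.
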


\begin{proof}
Consider $Q_1,Q_2\in\mathcal{C}_n$, $Q_1\subset R_1$, $Q_2\subset R_2$. For each $i=3,\ldots,k$, the family
\[
\bigl\{Q\in\mathcal{D}_{m+n}\,,|\,Q\subset R_i\,,\exists\text{ a line }\ell\text{ such that }\ell\cap Q_1\neq\varnothing,\ell\cap Q_2\neq\varnothing,\ell\cap Q\neq\varnothing\bigr\}\,,
\]
consists of at most $O(2^n)$ squares. Indeed, each square from this family is a subset of an $O(2^{-n})$ tubular neighbourhood of a fixed line. It follows that having selected $Q_1$ and $Q_2$, the family
\[
    \bigl|\bigl\{(Q_1,Q_2,Q_3,\ldots,Q_k)\in\mathcal{C}_n\bigr\}\bigr|
\]
has at most $O(2^{(k-2)n})$ elements. Since there are $2^{4n}$ possible ways to pick the squares $Q_1$ and $Q_2$, we conclude $|\mathcal{C}_n|=O(2^{(k+2)n})$.

To estimate $|\mathcal{C}_n|$ from below, pick a line $\ell$ that passes through the interior of the squares $R_i$. Then, there is an $\varepsilon>0$ such that also each line $\ell'$ which is $(2^{m+2}\varepsilon)$-close to $\ell$ in the Hausdorff distance of $[0,1]^2$ intersects each $R_i$ and, moreover, the length of any of the line segments $\ell'\cap R_i$ is $\Omega_{R_1,\ldots,R_k}(1)$. 

Let $E=\ell(\varepsilon)$ denote a tubular neighbourhood of $\ell$ of width $2\varepsilon$. 
Suppose $2^{1-n}<\varepsilon$. Let us pick $Q_1,Q_2\in\mathcal{C}_n$ such that $Q_1\subset E\cap R_1$, $Q_2\subset E\cap R_2$ and let $\ell'$ be a line that passes through $Q_1$ and $Q_2$. Since the distance between $R_1$ and $R_2$ is at least $2^{-m}$, it follows that $\ell$ and $\ell'$ are $(2^{m+2}\varepsilon)$-close and thus the length of each of the line segments $R_3\cap\ell',\ldots,R_k\cap\ell'$ is $\Omega(1)$. Consequently, there are $\Omega(2^{(k-2)n})$ different ways to pick squares $Q_3,\ldots, Q_k\in\mathcal{D}_{m+n}$, $Q_i\subset R_i$, along the line $\ell'$. Finally, since there are $\Omega(2^{4n})$ admissible choices for $Q_1$ and $Q_2$, it follows that $|\mathcal{C}_n|=\Omega(2^{(k+2)n})$. Note that we have assumed that $2^{1-n}<\varepsilon=\varepsilon(R_1,\ldots,R_k)$, but by enlarging the $O$-constants, the estimate remains valid for all $n$.
\end{proof}

\subsection{The case $p\le 2^{(-k-2)/k}$}
\label{subsec:A}

The proof of Proposition \ref{prop:discretized_statement} in this case (which implies Claim \ref{A}) is an application of the first moment method. We borrow some ideas from \cite[Proposition 5.13]{ShmerkinSuomala2020} to cover also the critical case $p=2^{(-k-2)/k}$. We note that in the subcritical case $p<2^{(-k-2)/k}$, it is possible to use the first moment method more directly and avoid the use of the FKG inequality altogether.

Given $C=Q_1\cup\ldots\cup Q_k\in\mathcal{C}_n$, where $Q_i\in\mathcal{D}_{n+m}$, we have $\PP(Q_i\subset A_{m+n})=p^n$ for all $i=1,\ldots,k$. Since the events $Q_i\subset A_{m+n}$ are independent, we infer that
$\PP(C\subset A_{m+n})=p^{kn}$.
Combining this with Lemma \ref{lem:card_C_n}, we have
\begin{equation}\label{eq:ExpY_n}
\EE(Y_n)=\Theta\left(2^{n(k+2)}p^{kn}\right)\,.
\end{equation}
(This holds regardless of the value of $p$; later we will use it also in the super-critical case.) By Markov's inequality, we thus have for each $M\ge 1$,
\begin{equation}
\begin{split}\label{eq:Markov}
\PP(Y_n\ge M)\le\frac{\EE(Y_n)}{M}
=\frac{O\left(2^{(k+2)n}p^{kn}\right)}{M}=\frac{O(1)}{M}\,,
\end{split}
\end{equation} 
using the assumption $p\le 2^{(-k-2)/k}$.

For each $C=Q_1\cup\ldots\cup Q_k\in\mathcal{C}_{n}$, we have the simple estimate 
\begin{equation}\label{eq:trivial_lb}
\begin{split}
&\PP\left(\sum_{\widetilde{C}\in\mathcal{C}_{n+1}\,,\widetilde{C}\subset C}\textbf{1}[\widetilde{C}\subset A_{m+n+1}]=0\,|\,C\subset A_{n+m}\right)\\
&\ge\PP\left(A_{m+n+1}\cap Q_i=\varnothing \text{ for some }i=1,\ldots, k\,|\,C\subset A_n\right)\\
&=1-(1-(1-p)^4)^k=:q>0\,.
\end{split}
\end{equation}

Conditional on $\mathcal{B}_{n+m}$, let $\{C_j=Q^j_1\cup\ldots\cup Q^{j}_k\in\mathcal{C}_n\}_{j=1}^{Y_n}$ constitute the configurations $C_j\in\mathcal{C}_n$ with $C_j\subset A_{n+m}$, and denote by $\mathcal{E}_j$ the event
\[Q^j_i\cap A_{m+n+1}=\varnothing\text{ for some }i=1,\ldots,k\,.\]
Since these are all decreasing events, the FKG inequality (Lemma \ref{lem:FKG}) combined with \eqref{eq:trivial_lb} yields
\begin{equation}\label{O4}
\PP(Y_{n+1}=0\,|\,\mathcal{B}_{n+m})=\PP\left(\bigcap_{j=1}^{Y_n}\mathcal{E}_j\,|\,\mathcal{B}_{n+m}\right)\ge q^{Y_n}\,.
\end{equation}

We wish to show that $\lim_n Y_n=0$ almost surely. Since the events $Y_n>0$ are nested  (i.e. $Y_n=0$ implies $Y_{n+1}=0$), it is enough to verify $\PP\left(Y_n>0\right)\rightarrow 0$ as $n\to\infty$. Assume on the contrary that $\PP(Y_n>0)>\eta>0$ for all $n$. Writing
\[
\PP(Y_n>0) = \PP(Y_n>0\,|\,{Y}_{n-1}>0)\times\cdots\times\PP(Y_2>0\,|\,Y_1>0)\times \PP(Y_1),
\]
we see that it is enough to show that $\PP(Y_{n+1}=0\,|\,Y_n>0)\ge c$ for some constant $c>0$ independent of $n$. By \eqref{eq:Markov}, we may pick $M$ so large that $\PP(0< Y_n\le  M) \ge \eta/2$ for all $n$. Combining this with \eqref{O4},
\[\PP(Y_{n+1}=0\,|\,Y_n>0)\ge\PP(Y_{n+1}=0\,|\,1\le Y_n\le M)\PP(Y_n\le M\,|\,Y_n>0)\ge \eta q^{M}/2\,,\]
as required.

\subsection{The case $p>2^{(-k-2)/k}$}
\label{subsec:B}

Throughout this section, we assume that 
\begin{equation}\label{eq:tech_up}
2^{k/(1-k)}> p>2^{(-k-2)/k}\,.
\end{equation} 
Imposing the upper bound is not a restriction, since 
\[
	\PP_p(A\text{ contains at least }k\text{-many points on a line})\]
is clearly non-decreasing in $p$. The goal is to show that, with positive probability, $Y_n>0$ for all $n$. Recall that this implies Claim \ref{B}. We use the second moment method; the required estimate on the variance is as follows:
\begin{theorem}\label{thm:reduced_main'}
	$\EE(Y_n^2)=O\left(2^{(2k+4)n}p^{2kn}\right)$.
\end{theorem}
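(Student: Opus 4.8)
The plan is to estimate $\EE(Y_n^2)$ by expanding the square and controlling the correlations between pairs of configurations according to how much they overlap. Writing $Y_n = \sum_{C \in \mathcal{C}_n} \textbf{1}[C \subset A_{m+n}]$, we have
\[
\EE(Y_n^2) = \sum_{C, C' \in \mathcal{C}_n} \PP(C \subset A_{m+n}, C' \subset A_{m+n})\,.
\]
Each $C$ consists of $k$ squares $Q_1, \ldots, Q_k$ with $Q_i \subset R_i$, and since the $R_i$ lie in distinct level-$m$ squares, the events involving different $R_i$ are independent. Thus the joint survival probability factorizes over the index $i$, and for each $i$ the two squares $Q_i \subset R_i$ and $Q_i' \subset R_i$ contribute a factor depending only on $j_i := I(Q_i, Q_i')$, the depth to which they agree. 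Explicitly, if $Q_i$ and $Q_i'$ share a common ancestor down to level $m + j_i$ and then split, the probability that both survive is $p^{j_i} \cdot p^{(n - j_i)} \cdot p^{(n - j_i)} = p^{2n - j_i}$, since the shared ancestry is counted once. Hence the summand equals $p^{\sum_{i=1}^k (2n - j_i)} = p^{2kn} \prod_{i=1}^k p^{-j_i}$.

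The main step is therefore to bound the combinatorial sum
\[
\EE(Y_n^2) = p^{2kn} \sum_{C, C'} \prod_{i=1}^k p^{-j_i}\,,
\]
by organizing pairs $(C, C')$ according to the vector of overlaps $(j_1, \ldots, j_k)$. First I would fix the overlap pattern and count how many pairs realize it. Choosing $C$ costs $\Theta(2^{(k+2)n})$ by Lemma \ref{lem:card_C_n}; then for each $i$ the partner square $Q_i'$ must agree with $Q_i$ down to level $m + j_i$, so $Q_i'$ lies in a fixed level-$(m+j_i)$ square, leaving it to range over a sub-square at relative depth $n - j_i$. But the crucial constraint is that $C' = Q_1' \cup \cdots \cup Q_k'$ must itself be collinear. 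As in the proof of Lemma \ref{lem:card_C_n}, once two of the squares $Q_1', Q_2'$ are chosen, the remaining $Q_3', \ldots, Q_k'$ are confined to an $O(2^{-n})$ tube around a line and hence at most $O(2^n)$ choices survive per index. The number of ways to choose $Q_1', Q_2'$ with prescribed overlaps $j_1, j_2$ is $O(2^{2(n - j_1)} 2^{2(n - j_2)})$, and each of the remaining indices contributes $O(2^{n - j_i})$ collinear choices constrained to agree with $Q_i$ to depth $j_i$ — I expect the collinearity constraint to cap this at $O(2^{\min(n - j_i, \, n)})$, effectively $O(2^{n - j_i})$ once the overlap forces the tube. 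Multiplying through and summing the geometric factors $\sum_{j_i} p^{-j_i} 2^{-(\text{exponent in } j_i)}$ should, under the hypothesis \eqref{eq:tech_up}, converge to $O(1)$ for each index, yielding
\[
\EE(Y_n^2) = O\!\left(2^{(k+2)n} \cdot 2^{(k+2)n} p^{2kn}\right) = O\!\left(2^{(2k+4)n} p^{2kn}\right)\,,
\]
matching the claimed bound.

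The hardest part, and where the upper bound $p < 2^{k/(1-k)}$ in \eqref{eq:tech_up} is surely used, is verifying that each geometric sum over the overlap depth $j_i$ converges rather than growing with $n$. Roughly, summing over $j_i$ one collects $\sum_{j_i=0}^{n} p^{-j_i} 2^{-a j_i}$ where $a > 0$ is the number of spatial degrees of freedom frozen by increasing the overlap by one level; convergence requires $p^{-1} 2^{-a} < 1$, i.e.\ $p > 2^{-a}$, and the borderline index (likely the constrained indices $i \ge 3$, where only one degree of freedom is free along the tube rather than two) is what forces the precise threshold. I would need to track carefully how many free directions each square has: the first two squares $Q_1', Q_2'$ move in two dimensions each, while $Q_3', \ldots, Q_k'$ move in essentially one dimension along the fixed tube, and matching these counts against the powers of $p^{-1}$ is exactly what produces the exponent $k/(1-k)$. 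Getting this bookkeeping right — correctly attributing $2$ free directions to the first two squares, $1$ to the rest, and checking that the resulting geometric series all converge under \eqref{eq:tech_up} — is the crux of the argument; the rest is routine geometric summation.
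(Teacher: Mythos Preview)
Your outline has the right shape---factorize over $i$, group pairs by the overlap vector $(j_1,\ldots,j_k)$, count, and sum geometrically---but there is a genuine gap in the counting that makes the sums diverge. With indices $1,2$ always taken as the ``two-dimensional'' free choices and $i\ge 3$ constrained to the tube, the per-index contribution for $i\ge 3$ is $\sum_{j_i=0}^{n}(2p)^{-j_i}$, whose ratio $(2p)^{-1}>1$ (since $p<\tfrac12$) forces the sum to be $\Theta((2p)^{-n})$, not $O(1)$. Multiplying $k-2$ such factors blows up the bound by $(2p)^{-(k-2)n}$. Your guess that the upper bound $p<2^{k/(1-k)}$ rescues this is off: that bound is a harmless simplification, imposed only because the claim is monotone in $p$; convergence is actually driven by the \emph{lower} bound $p>2^{-(k+2)/k}$.

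The fix in the paper is to make the assignment of ``free'' versus ``tube-constrained'' depend on the overlap pattern: order the overlaps as $s=d^1\ge l=d^2\ge\cdots\ge d^k$ and let the two \emph{largest}-overlap indices be the two-dimensional free ones. The count becomes $|\Phi_{s,l,r}|=O(2^{(k+2)n-2s-2l-r})$ with $r=\sum_{i\ge 3}d_i$ (Lemma~\ref{lemma:size_Phi}), and the reordering buys the crucial constraint $d_i\le l$ for $i\ge 3$, hence $0\le r\le (k-2)l$. Now the divergent inner sum $\sum_r(2p)^{-r}$ is capped at $(2p)^{-(k-2)l}$ rather than $(2p)^{-(k-2)n}$; this factor is absorbed into the sum over $l$ (whose dominant term, under $p<2^{k/(1-k)}$, sits at $l=s$), and the final sum $\sum_s 2^{-(k+2)s}p^{-ks}$ is $O(1)$ precisely because $p>2^{-(k+2)/k}$. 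In short: your two-versus-one degrees-of-freedom heuristic is correct, but the two squares that determine the tube must be the ones \emph{closest} to $C'$, not a fixed pair of indices; without this, the bookkeeping you identify as ``the crux'' cannot be made to close.
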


Proposition \ref{prop:discretized_statement} follows from this by a standard argument. Indeed, we only have to show that, with positive probability,  $Y_n\not\rightarrow 0$. Assume on the contrary that $Y_n\rightarrow 0$ almost surely. Consider the normalized sequence $Z_n= 2^{-n(k+2)}p^{-kn}Y_n$. It follows from  \eqref{eq:ExpY_n} that 
\begin{equation} \label{eq:ExpZ_n}
\EE(Z_n)=\Theta(1)\,.
\end{equation}
Since $p>2^{(-k-2)/k}$, we also have $Z_n\rightarrow 0$ almost surely. Given $M>0$, write
\begin{align*}
\EE(Z_n)=\EE\left(Z_n\textbf{1}[Z_n\le M]\right)+\EE\left(Z_n\textbf{1}[Z_n>M]\right)\,.
\end{align*}
Here $\lim_n\EE\left(Z_n\textbf{1}[Z_n\le M]\right)=0$ by  bounded convergence, whereas by Theorem \ref{thm:reduced_main'}
\[\EE\left(Z_n\textbf{1}[Z_n>M]\right)\le\frac{\EE(Z_n^2)}{M}=O\left(\frac1M\right)\longrightarrow 0\,,\]
as $M\to\infty$. We conclude that $\EE(Z_n)\rightarrow 0$, contradicting \eqref{eq:ExpZ_n}.

Towards the proof of Theorem \ref{thm:reduced_main'}, our main task is to control the correlations between the events $C\subset A_{n+m}$ and $C'\subset A_{n+m}$ for disjoint pairs $C,C'\in\mathcal{C}_n$. To that end, let us fix $C'=Q'_1\cup\ldots\cup Q'_k\in\mathcal{C}_n$ and for $i=1,\ldots,k$ let
$d_i=d_i(C,C')=I(Q_i,Q'_i)-m$ be the maximal integer such that $Q_i$ and $Q'_i$ belong to the same dyadic cube of level $m+d_i$. Let 
\[
	n\ge d^1(C,C')\ge d^2(C,C')\ge\ldots\ge d^k(C,C')\ge 0
\] 
denote the $d_i$:s in decreasing order. Given $1\le l\le s\le n$ and $0\le r\le (k-2)l$, define $\Phi_{s,l,r}=\Phi_{s,l,r}(C')$ as
\[
	\Phi_{s,l,r}=\left\{C\in\mathcal{C}_n\,:\,d^2(C,C')=l,\,d^1(C,C')=s,\,\sum_{i=1}^k d_i(C,C')-l-s=r\right\}\,.
\]

We proceed by estimating the size of each $\Phi_{s,l,r}$ and the probability of $C\subset A_{m+n}$ for $C\in\Phi_{s,l,r}(C')$, conditional on $C'\subset A_{m+n}.$
\begin{lemma}\label{lemma:size_Phi}
	We have $|\Phi_{s,l,r}|=O_k(2^{(k+2)n-2s-2l-r})$ for all $C'=(Q'_1,\ldots,Q'_k)\in\mathcal{C}_n$.
\end{lemma}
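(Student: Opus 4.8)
The plan is to bound $|\Phi_{s,l,r}|$ by choosing the two squares with the \emph{largest} branching levels $d^1=s$ and $d^2=l$ freely (subject only to their branching constraint), and then using collinearity to confine the remaining $k-2$ squares to a thin tube. First I would fix an assignment of the $k$ indices to the sorted values $d^1\ge\cdots\ge d^k$; since there are at most $k!=O_k(1)$ such assignments, it suffices to bound the number of $C\in\Phi_{s,l,r}$ for which two prescribed indices $i_1,i_2$ satisfy $d_{i_1}=s$, $d_{i_2}=l$, while the remaining indices carry the values $d^3,\ldots,d^k$ (whose sum is $r$).

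For a single coordinate I would record the elementary fact that, for fixed $d$, the number of squares $Q\in\mathcal{D}_{n+m}$ with $Q\subset R_i$ and $I(Q,Q'_i)-m=d$ is $O(2^{2(n-d)})$: such a $Q$ must lie in the level-$(m+d)$ dyadic ancestor of $Q'_i$, a square of side $2^{-(m+d)}$, and there are $3\cdot 2^{2(n-d-1)}$ of its level-$(n+m)$ descendants branching off at the correct level (just $1$ when $d=n$). Applying this to the two free coordinates gives $O(2^{2(n-s)})\cdot O(2^{2(n-l)})=O(2^{4n-2s-2l})$ choices.

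Next I would bring in collinearity. Having fixed $Q_{i_1}$ and $Q_{i_2}$, any line witnessing membership in $\mathcal{C}_n$ passes through both; since $R_{i_1}$ and $R_{i_2}$ are at distance $\ge 2^{-m}$, the pencil of such lines lies in a tubular neighbourhood of width $O(2^{-n})$ of a fixed line, exactly as in the proof of Lemma \ref{lem:card_C_n}. Hence \emph{each} remaining square $Q_i$ must lie in this tube (a necessary condition, so counting choices within it gives a valid upper bound) and, by its branching constraint, inside the level-$(m+d_i)$ ancestor of $Q'_i$. The main step is the geometric count: a tube of width $O(2^{-n})$ meets a square of side $2^{-(m+d_i)}$ in a region covered by $O(2^{n-d_i})$ squares of $\mathcal{D}_{n+m}$, since the region has area $O(2^{-n}\cdot 2^{-(m+d_i)})$ and each covering square has area $2^{-2(n+m)}$. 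Taking the product over the $k-2$ remaining coordinates yields $O(2^{(k-2)n-r})$, as their branching levels sum to $r$.

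Multiplying the two contributions gives $O(2^{4n-2s-2l})\cdot O(2^{(k-2)n-r})=O(2^{(k+2)n-2s-2l-r})$, as claimed, with constants depending only on $k$ and the fixed data $m,R_1,\ldots,R_k$. I expect the tube-meets-square count to be the main technical point, together with the observation that designating precisely the two largest branching levels as the free coordinates is what makes the exponents match: a free coordinate costs $2^{2(n-d)}$ while a constrained one costs only $2^{n-d}$, so the resulting bound is tightest exactly when the free coordinates carry the largest values $s$ and $l$.
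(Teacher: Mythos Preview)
Your approach is essentially identical to the paper's: both fix the permutation sorting the $d_i$'s (at a cost of $k!=O_k(1)$), count the two ``free'' squares carrying the largest branching levels $s$ and $l$ at cost $O(2^{2(n-s)})\cdot O(2^{2(n-l)})$, and then use the $O(2^{-n})$-tube through those two squares to confine each remaining $Q_i$ to a segment of length $O(2^{-d_i})$ inside the level-$(m+d_i)$ ancestor of $Q'_i$, giving $O(2^{n-d_i})$ choices each and hence $O(2^{(k-2)n-r})$ for their product. The paper's write-up is terser but follows exactly the same steps; your concluding remark on why the two \emph{largest} branching levels should be the free coordinates is a nice clarification of a point the paper leaves implicit.
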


\begin{proof}
	We refine the argument from the proof of Lemma \ref{lem:card_C_n}. After reordering the squares $R_i$, we may assume that for each $C\in\mathcal{C}_n$, $d_j(C,C')=d^j(C,C')$. That is, the $j$-th smallest dyadic distance between $Q_i$ and $Q'_i$ is realized inside $R_j$.
	Since there are $k!=O_k(1)$ ways to reorder the sequence $R_1,\ldots,R_k$, this assumption does not affect the generality.
	  
	Thus, for each $i=1,\ldots,k$, the squares $Q_i$ and $Q'_i$ belong to a same dyadic square of level $m+d_i$. Moreover, if $C\in\Phi_{s,l,r}$, we have $d_1=s$, $d_2=l$, and $\sum_{i=3}^k d_i=r$.
	
	Let us now pick $Q_1,Q_2\in\mathcal{D}_{m+n}$, $Q_1\subset R_1$, $Q_2\subset R_2$ such that $Q_1$ and $Q'_1$ are sub-squares of a same square in $\mathcal{D}_{m+s}$ and $Q_2$ and $Q'_2$ are sub-squares of a same square in $\mathcal{C}_{m+l}$.
	The family of $Q\in\mathcal{D}_{m+n}$ satisfying
    	\[\,I(Q,Q'_i)=m+d_i\,,\exists\text{ a line }\ell\text{ with }\ell\cap Q_1\neq\varnothing,\ell\cap Q_2\neq\varnothing,\ell\cap Q\neq\varnothing\,,\]
	consists of at most $O(2^{n-d_i})$ elements for each $i=3,\ldots,k$, simply because each of its elements must intersect a fixed tubular neighbourhood of a line segment of length $O(2^{-d_i})$ and of width $O(2^{-n})$. Combining this with the fact that there are $\le 2^{2(n-s)}$ possible choices for $Q_1$ and $\le 2^{2(n-l)}$ choices for $Q_2$, we arrive at
	\[
		|\Phi_{s,l,r}|=O\left(2^{2(n-s)+2(n-l)+\sum_{i=3}^k(n-d_i)}\right)=O\left(2^{(k+2)n-2s-2l-r}\right)\,,
	\]
	as required.
\end{proof}

\begin{lemma}\label{lem:effect_of_correlations}
	If $C\in\Phi_{s,l,r}(C')$, then
	$\PP(C\subset A_{n+m}\,|\,C'\subset A_{n+m})= p^{kn-s-l-r}$.
\end{lemma}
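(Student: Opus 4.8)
The plan is to compute the conditional probability directly by decomposing the event $C \subset A_{n+m}$ according to how the construction of $C$ overlaps with that of $C'$. Recall that, after the reordering used in Lemma \ref{lemma:size_Phi}, for each $i=1,\ldots,k$ the squares $Q_i$ and $Q_i'$ lie in a common dyadic square of level $m+d_i$, where $d_i = d_i(C,C')$. Since we have conditioned on $R_1,\ldots,R_k \subset A_m$ and the sets $A_{m+n}\cap R_j$ are mutually independent across $j$, the event $C \subset A_{n+m}$ factors as a product over $i=1,\ldots,k$ of the events $Q_i \subset A_{n+m}$, and likewise for $C'$. Hence it suffices to analyse each coordinate $i$ separately and then multiply.

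The key step is the single-coordinate computation. Fix $i$ and work inside $R_i$. The event $Q_i' \subset A_{n+m}$ specifies that all $n$ generations of cells containing $Q_i'$ survive. Because $Q_i$ and $Q_i'$ share the ancestor of level $m+d_i$, the first $d_i$ of these survival events are \emph{common} to the chains leading to $Q_i$ and to $Q_i'$: once $C' \subset A_{n+m}$ is given, the common ancestor up to level $m+d_i$ is already known to survive. The remaining $n-d_i$ generations needed for $Q_i \subset A_{n+m}$ concern cells strictly below the splitting level, and these are disjoint from, hence independent of, the cells determining $Q_i' \subset A_{n+m}$. Therefore
\[
\PP(Q_i \subset A_{n+m} \mid Q_i' \subset A_{n+m}) = p^{\,n - d_i},
\]
since exactly $d_i$ of the $n$ required survival events are already guaranteed and the other $n-d_i$ are independent, each of probability $p$.

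Multiplying over $i=1,\ldots,k$ and using independence across the $R_i$ gives
\[
\PP(C \subset A_{n+m} \mid C' \subset A_{n+m}) = \prod_{i=1}^k p^{\,n-d_i} = p^{\,kn - \sum_{i=1}^k d_i}.
\]
Finally, by the definition of $\Phi_{s,l,r}(C')$ we have $d^1 = s$, $d^2 = l$, and $\sum_{i=1}^k d_i - s - l = r$, so $\sum_{i=1}^k d_i = s + l + r$ and the exponent equals $kn - s - l - r$, as claimed. The main point to handle carefully is the independence assertion underlying the single-coordinate step: one must verify that conditioning on the survival of the full chain to $Q_i'$ determines exactly the ancestor of $Q_i$ up to level $m+d_i$ and nothing about the cells below the split, which is precisely what the definition of $d_i = I(Q_i,Q_i')-m$ guarantees. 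Everything else is the bookkeeping of the product formula, which is routine once this independence is established.
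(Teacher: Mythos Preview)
Your proof is correct and follows essentially the same approach as the paper: compute $\PP(Q_i\subset A_{n+m}\mid Q_i'\subset A_{n+m})=p^{n-d_i}$ using that the common ancestor at level $m+d_i$ already survives and the remaining $n-d_i$ steps are independent, then multiply over $i$ using the independence of $A_{m+n}\cap R_i$ across $i$, and finally note $\sum_i d_i=s+l+r$. The only cosmetic difference is that you invoke the reordering from Lemma~\ref{lemma:size_Phi}, which is unnecessary here since $\sum_i d_i=\sum_i d^i$ regardless of ordering; the paper's proof omits this step.
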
 

\begin{proof}
Recall that if $C=(Q_1,\ldots,Q_k)\in\Phi_{s,l,r}(C')$ for $C'=(Q'_1,\ldots,Q'_k)$, then $\sum_{i=1}^k d_i=s+l+r$, where $d_i=I(Q_i,Q'_i)-m$. Thus, there are squares $D_i\in\mathcal{D}_{m+d_i}$ such that $Q_i,Q'_i\subset D_i$, but conditional on $D_i\subset A_{m+d_i}$, the events $Q_i\subset A_{m+n}$, $Q'_i\subset A_{m+n}$ are independent. Recalling once more that as we are conditioning on $R_1\cup\ldots\cup R_k\subset A_m$, the laws of $A_{m+n}\cap R_i$ are independent, it follows that
\[\PP(Q_i\subset A_{m+n}\,|\,C'\subset A_{m+n})=\PP(Q_i\subset A_{m+n}\,|\,D_i\subset A_{m+d_i})=p^{n-d_i}\,.\]
Using again the independence of $A_{m+n}\cap R_i$, $i=1,\ldots,k$,
\[\PP(C\subset A_{m+n}\,|\,C'\subset A_{m+n})=p^{kn-\sum_{i=1}^k d_i}=p^{kn-s-l-r}\,,\]
which is what we want.
\end{proof}

\begin{proof}[Proof of Theorem \ref{thm:reduced_main'}]
For each $C'\in\mathcal{C}_{n}$, we use Lemmas \ref{lemma:size_Phi} and \ref{lem:effect_of_correlations} to estimate
\begin{align*}
\EE_{C'\subset A_{m+n}}\left(\sum_{C\in\mathcal{C}_n}\textbf{1}[C\subset A_{n+m}]\right)&\le\sum_{s=0}^n\sum_{l=0}^s\sum_{r=0}^{(k-2)l}|\Phi_{s,l,r}(C')|p^{kn-s-l-r}\\
&O\left(2^{(k+2)n}p^{kn}\right)\sum_{s=0}^n2^{-2s}p^{-s}\sum_{l=0}^s2^{-2l}p^{-l}\sum_{r=0}^{(k-2)l}2^{-r}p^{-r}\\
&=O\left(2^{(k+2)n}p^{kn}\right)\sum_{s=0}^n2^{-2s}p^{-s}\sum_{l=0}^s2^{-kl}p^{(1-k)l}\\
&=O\left(2^{(k+2)n}p^{kn}\right)\sum_{s=0}^n2^{(-k-2)s}p^{-ks}\\
&=O\left(2^{(k+2)n}p^{kn}\right)\,,
\end{align*}
since we are assuming that $2^{(-k-2)/k}<p<2^{k/(1-k)}<\tfrac12$.

Using this and \eqref{eq:ExpY_n}, we conclude
\begin{align*}
\EE(Y_n^2)&=\sum_{C'\in\mathcal{C}_n}\PP(C'\subset A_{m+n})\EE_{C'\subset A_{m+n}}\left(\sum_{C\in\mathcal{C}_n}\textbf{1}[C\subset A_{n+m}]\right)\\
&=O\left(2^{(2+k)n}p^{kn}\right)\EE(Y_n)\\
&=O\left(2^{(4+2k)n}p^{2kn}\right)\,,
\end{align*}
and this completes the proof.
\end{proof}

\begin{remark}\label{rem:all_k}
If $p_m<p<\tfrac12$, then for all $1\le k\le m$, there are lines containing exactly $k$ points of $A$. This does not follow directly from Theorem \ref{thm:finite-intersections} as stated, but it can be deduced using a slight variant of the proofs. Indeed, we may choose $\varepsilon>0$ such that $p^{1-\varepsilon}<\tfrac12$ and remove from $\mathcal{C}_n$ those $k$-tuples $C=Q_1\cup\ldots\cup Q_k$ such that some line passing through $Q_1,\ldots,Q_k$ intersects a square $Q\in\mathcal{D}_{m+n}$ such that $Q\subset A_{n+m}$ and $\dist(Q,C)>2^{-\varepsilon n}$. The proof goes through with this modified collection $\mathcal{C}_n$. We leave the details to the interested reader.
\end{remark}

\section{Proof of Theorem \ref{thm:infinite-intersections}}

\subsection{Proof of Claim \ref{C}}

\label{subsec:C}

It suffices to show that 
\begin{equation}\label{eq:sup_length}
    \sup_{\ell}\lambda(A_n\cap\ell)=O(n^3 2^{-n})\text{ almost surely,}
\end{equation}
where the supremum is over all lines and $\lambda$ denotes Lebesgue measure on the line $\ell$.
Indeed, if $\ell$ intersects $\ge c$ squares $Q\in\mathcal{D}_n$, $Q\subset A_n$, there is a nearby line $\ell'$ for which 
$\lambda(A_n\cap\ell')=\Omega(c 2^{-n})$.

In order to prove \eqref{eq:sup_length}, let us momentarily fix $M\ge 1$ and let $\mathcal{A}_n$ denote the event 
\[
	\sup_{\ell}\lambda(A_n\cap\ell)<M\, n^3 \, 2^{-n}\,.
\] 
Conditional on $\mathcal{A}_n$, for each fixed line, \cite[Lemma 3.4]{ShmerkinSuomala2018} yields
\[
	\PP\left(\lambda(A_{n+1}\cap\ell)>\left(M(n+1)^3-1\right)2^{-n-1}\right)=O(\exp(-\Omega(1)M n))\,.
\]
On the other hand, it is not hard to see (\cite[Lemma 3.3]{ShmerkinSuomala2015}) that there is a finite deterministic family of lines $\mathcal{L}_n$, with $|\mathcal{L}_n|=\exp(O(n))$, such that
\[
	\sup_{\ell\in\text{lines}}\lambda(A_n\cap\ell)\le\sup_{\ell\in\mathcal{L}_n}\lambda(A_n\cap\ell)+2^{-n-1}\,.
\]
Combining the last two estimates yields $\PP(\mathcal{A}^c_{n+1}\,|\,\mathcal{A}_n)=O(\exp(-\Omega(Mn))$. By virtue of the Borel-Cantelli lemma, we then have the estimate
\[\PP\left(M<\sup_{\ell\in\text{lines},\, n\in\mathbb{N}}n^{-3}\,2^n\,\lambda(A_n\cap\ell)\right)=O(\exp(-\Omega(M))\,.\]
In particular, this implies \eqref{eq:sup_length}.

\subsection{Proof of Claim \ref{D}}

We start by introducing a class of Cantor constructions on $[0,1]$ with respect to relative scales $m_1,m_2,\ldots\in\N$. More precisely, at stage $n$ of the construction, we will work with dyadic intervals of size $2^{-s_n}$, $s_n=m_1+\ldots+m_n$ (with $s_0=0$). In order to achieve our goal, we choose these numbers so that $\frac{m_n}{\log m_n}=\Theta(2^n)$ or, equivalently, $2^n\phi(2^{-s_n})=\Theta(1)$ (where $\phi$ is as in the statement of Theorem \ref{thm:infinite-intersections}). For our computations, it will be convenient to make these assumptions slightly more quantitative:
we assume that $m_n$ is a non-decreasing sequence of integers satisfying
\begin{equation}\label{eq:m_n}
m_n\ge 2^{n+4}\,,\quad\quad\text{and}\quad\quad
c\le\frac{2^{n+1} \log_2(m_{n+1}+1)}{m_{n}}\le 1
\end{equation}
for all $n\ge -2$, where $c>0$ is a constant.
(We define the numbers $m_{-2}, m_{-1},m_0\in\N$ for technical reasons, these will not be used in the construction of the set.) A concrete example sequence satisfying these assumptions is given by $m_n=(n+3)2^{n+4}$.

In the first stage of the construction, we pick one dyadic interval of size $2^{-m_1}$ inside $[0,\tfrac12]$, and another one inside $[\tfrac12,1]$. At each step $n$, we have a union of $2^n$ dyadic intervals of size $2^{-s_n}$ and each of these intervals $I=[q,q+2^{-s_n}]$ is replaced by two dyadic subintervals of length  $2^{-s_{n+1}}$, such that one of the chosen intervals is contained in $[q,q+2^{-s_n-1}]$, and the other one is contained in $[q+2^{-s_n-1},q+2^{-s_n}]$. 
A discretized level $n$ Cantor set is the union of the thus obtained $2^n$ closed dyadic intervals of length $2^{-s_n}$. We let $\Gamma_n$ denote all such discretized level $n$ Cantor sets on $[0,1]$ and, moreover, let 
\[
	\mathcal{C}_n=\Gamma_n\times\left\{(k+\tfrac12) 2^{-s_n}\,:\,0\le k\le 2^{s_n}-1\right\}
\]
consist of copies of such discretized Cantor sets on the horizontal lines 
\[
	\mathcal{L}_n=\left\{\R\times \{(k+\tfrac12) 2^{-s_n}\}\,:\,0\le k\le 2^{s_n}-1\right\}\,.
\] 
We let $\ell_C\in\mathcal{L}_n$ denote the line containing $C$, for $C\in\mathcal{C}_n$.

Observe that
\begin{align*}
|\mathcal{C}_n|&=2^{s_n}\times2^{2(m_1-1)}\times 2^{4(m_2-1)}\times\ldots\times2^{2^{n}(m_n-1)}\\
&=2^{2-2^{n+1}+\sum_{j=1}^n (2^j+1) m_j}\,.    
\end{align*}
For each $C\in\mathcal{C}_n$, 
\[
	\PP(C\subset A_{s_n})=2^{-\sum_{j=1}^n 2^j m_j}
\]
and thus the random variable
$Y_n=\sum_{C\in\mathcal{C}_n}\textbf{1}[C\subset A_ {s_n}]$ satisfies
\begin{equation} \label{eq:expect-Cantor-Yn}
  \EE(Y_n) = 	2^{2-2^{n+1}+s_n}.
\end{equation}
It now follows from \eqref{eq:m_n} that $\EE(Y_n)\rightarrow\infty$ as $n\to\infty$. The key to the proof is the following correlation estimate.
\begin{proposition} \label{prop:correlation-Cantor}
For a fixed $C\in\mathcal{C}_n$,
\begin{equation}\label{eq:ultimate_goal}
	\sum_{C'\in\mathcal{C}_n}\PP\left(C'\subset A_{s_n}\,|\,C\subset A_{s_n}\right)=O(1)2^{-2^{n+1}+s_n}\,.   
	\end{equation}		
\end{proposition}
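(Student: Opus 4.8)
The goal is to bound the correlation sum in \eqref{eq:ultimate_goal} by $O(1)$ times the expected size $\EE(Y_n) = 2^{2-2^{n+1}+s_n}$, which (after dropping the harmless factor of $4$) matches the right-hand side. This is the second-moment input needed to show $\PP(Y_n>0)$ stays bounded below, and eventually to produce a horizontal line slice of positive $\mathcal{H}^\phi$-measure. The plan is to fix $C\in\mathcal{C}_n$ and organize the sum over $C'$ according to how much $C'$ shares with $C$, scale by scale, exactly as in the proof of Theorem \ref{thm:reduced_main'} but now adapted to the multiscale Cantor geometry.

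First I would set up the right notion of ``overlap.'' For each $C'\in\mathcal{C}_n$, the conditional probability $\PP(C'\subset A_{s_n}\mid C\subset A_{s_n})$ depends only on how many of the defining dyadic cubes of $C'$ at each level coincide with those of $C$: if at stage $j$ the two constructions agree on $N_j$ of the $2^j$ intervals (equivalently cubes), then the conditioning saves a factor accounting for the shared survival events. Since the Cantor construction at stage $j$ refines each surviving interval into two, the natural parameter is the generation $a\in\{0,1,\ldots,n\}$ up to which $C$ and $C'$ branch identically along a given strand, together with the horizontal line $\ell_{C'}$. Concretely, I would group the sum by the finest level $a$ at which $C'$ still fully agrees with $C$, and—because the branching is binary and the vertical placement of the line is free—count (i) the number of $C'$ agreeing with $C$ up to generation $a$ and differing thereafter, and (ii) the probability gain from the shared cubes.

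The main computation is then a geometric-series estimate. If I write $T_a$ for the contribution of configurations whose deepest level of full agreement with $C$ is $a$, then using $\PP(C\subset A_{s_n})=2^{-\sum_{j=1}^n 2^j m_j}$ and the cardinality bookkeeping from the displayed formula for $|\mathcal{C}_n|$, the conditional probability for a $C'$ agreeing up to generation $a$ is $2^{-\sum_{j>a}2^j m_j}\cdot(\text{shared factor at levels }\le a)$, while the number of such $C'$ is controlled by the free choices at levels $>a$ plus the $2^{s_n}$ choices of horizontal line. The key cancellation is that the probability decay $2^{-2^{a+1}m_{a+1}\cdots}$ beats the combinatorial growth $2^{(2^{a+1}+1)m_{a+1}\cdots}$ precisely because of the constraint \eqref{eq:m_n}, which forces $2^{n+1}\log_2(m_{n+1}+1)/m_n$ to stay in $[c,1]$; this is what makes the per-level ratios summable and yields a convergent geometric-type series whose total is $O(1)2^{-2^{n+1}+s_n}$. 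I would verify that the dominant term comes from $a=n$ (no extra agreement, the ``diagonal-free'' bulk), contributing exactly $\EE(Y_n)$, with all deeper-agreement corrections summing to a constant multiple.

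The hard part will be the combinatorial bookkeeping of how many $C'$ share a prescribed pattern of cubes with $C$ across the binary tree of the Cantor construction, since agreement is not captured by a single overlap depth but by a subtree of shared branches—each strand of $C$ can split off from $C'$ at its own generation, so the naive single-parameter grouping may be too lossy. I expect I will need to sum over the profile of splitting generations (or at least over the total ``saved'' survival exponent $\sum_j 2^j m_j$ restricted to shared cubes) and show that the worst case is governed by the last scale, using monotonicity of $m_n$ and the two-sided bound in \eqref{eq:m_n} to dominate the multi-parameter sum by a single geometric series. Once the counting is correctly stratified, the probability weights should telescope against the cardinalities so that only the bulk term survives at order $\EE(Y_n)$, and everything else contributes a bounded multiplicative constant.
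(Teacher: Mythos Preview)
Your overall strategy—stratify $C'$ by how its construction intervals branch away from those of $C$ and show the stratified sum is dominated by the no-overlap bulk—matches the paper's, and you correctly note that a single ``depth of agreement'' parameter $a$ is too coarse because different strands of the binary tree can separate at different generations. (Incidentally, your description of $a=n$ as ``no extra agreement'' is inconsistent with $a$ being the finest level of full agreement; but this is a side issue.) Two concrete ingredients are missing, and without them the bookkeeping you sketch does not close.

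First, the vertical dyadic distance $2^{p-s_k}$ between $\ell_C$ and $\ell_{C'}$ is what organizes the problem: once the lines separate at scale $s_k-p$, the branching profile of $C'$ relative to $C$ is encoded by a \emph{labelled cutset} $(K,\lambda)$ of the depth-$k$ binary tree, with $\lambda_u\in\{1,\ldots,m_{|u|}\}$ recording the exact separation scale on each branch. The parameter $k$ is what bounds $|K|\le 2^k$, the number of cutsets by $2^{2^k}$, and the number of labellings by $(m_k+1)^{2^k}$; it is precisely these factors that get absorbed by $m_{k-1}$ and $m_{k-2}$ via \eqref{eq:m_n} in the final summation. Without first fixing $k$, you have no handle on the size of the space of branching profiles. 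Second, and more crucially, the ``telescoping'' you hope for is driven by an exact identity rather than a geometric-series estimate: for any cutset $K$, one has $\sum_{u\in K}2^{n-|u|+1}=2^{n+1}$, since replacing a vertex by its two children preserves this sum. Consequently $\prod_{u\in K}2^{2-2^{n-|u|+1}}=2^{2|K|-2^{n+1}}$, which extracts the factor $2^{-2^{n+1}}$ in \eqref{eq:ultimate_goal} independently of the shape of $K$. Monotonicity of $m_n$ and the two-sided bound in \eqref{eq:m_n} alone will not reproduce this cancellation; the cutset invariant is the missing structural observation.
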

\begin{proof}
Fix $C\in\mathcal{C}_n$ and let $\ell=\ell_C\in\mathcal{L}_n$.

Fix $1\le k\le n$ and $1\le p\le m_{k}$ for the time being. We consider lines $\ell'\in\mathcal{L}_n$ whose dyadic distance to $\ell$ is $2^{p-s_k}$. Suppose that $C'\in\mathcal{C}_n$ is contained in $\ell'$. We next explain how to determine the correlation structure between the events $C\subset A_{s_n}$, $C'\subset A_{s_n}$.

We let $T(k)$ denote the level $k$ dyadic tree, with root $\varnothing$. For each vertex $v$, we label one of its children as ``left'' and the other one as ``right''. Moreover, we mark the vertices $v\in T(k)$ with the construction intervals of $C$ in a natural way: the $2^j$ vertices of $T(k)$ at distance $|v|=j$ from the root are marked with the construction intervals $I_{v}(C)$ of $C$ of length $2^{-s_j}$ from left to right with respect to the lexicographic order of the tree arising from the ``left/right''-labelling. 

Recall that a cutset of a tree is a set of vertices such that every path from the root to a leaf intersects the cutset at a unique vertex. We define a cutset $\CUT(C,C')$ of $T(k)$ as follows. Let $u^{-}$ denote the parent of $u\in T(k)\setminus\varnothing$. Then $u\in\CUT(C,C')$ if and only if the intervals $I_u(C)$ and $I_u(C')$ fall into different level $s_{|u|}$ dyadic squares (note this implies $u\neq\varnothing$), while $I_{u^{-}}(C)$ and $I_{u^{-}}(C')$ are contained in the same level $s_{|u|-1}$ dyadic square. Since, by assumption, the dyadic distance between $\ell$ and $\ell'$ is $>2^{-s_k}$, this is indeed a  cutset of $T(k)$.

Let $u\in\CUT(C,C')$. By definition, there exists $\lambda_u\in\{1,\ldots,m_{|u|}\}$ such that the intervals $I_u(C)$ and $I_u(C')$ split at the dyadic scale $s_{|u|}-\lambda_u$. (More precisely, the dyadic squares in $\mathcal{D}_{s_{|u|}-j}$ containing $I_u(C)$ and $I_u(C')$ are the same for $j=\lambda_u$ and different for $j=\lambda_u-1$.)  We refer to the pair $(\CUT(C,C');\lambda)$ as the labelled cutset $\LCUT(C,C')$.

In terms of this labelling, the probability that $C'$ survives given that $C$ survives is given by
\begin{align}\label{eq:cond_P}
    \PP(C'\subset A_{s_n}\,|\,C\subset A_{s_n})=\prod_{u\in \CUT(C,C')} 2^{-\lambda_u-\sum_{j=|u|+1}^n  2^{j-|u|}m_j}\,.
\end{align}
This follows since the process inside each of the squares corresponding to the cutset is independent of the process in all the other squares.

Let $(K,\lambda)$ be a labelled cutset of the $T(k)$ tree (with $1\le \lambda_{u}\le m_{|u|}$). We aim to count how many elements $C'\in\mathcal{C}_n$ satisfy $\LCUT(C,C')=(K,\lambda)$ and $d(\ell_C,\ell_{C'})=2^{p-s_k}$. On the one hand, there are 
\[
	2^{m_{k+1}+\ldots+m_n+p}
\]
choices for the line $\ell_{C'}$. (Note that we interpret this as $2^p$ if $k=n$.)
On the other hand, once the line $\ell_{C'}$ is fixed, given a vertex $u\in K$, there are 
\[
	2^{\lambda_u+2-2^{n-|u|+1}+\sum_{j=|u|+1}^n 2^{j-|u|}m_j}
\]
choices for $(Q_u, C'\cap Q_u)$, where $Q_u$ is the dyadic square containing $I_u(C')$. Thus, there are  
\begin{equation} \label{eq:sets-for-given-LCUT}
    2^{m_{k+1}+\ldots+m_n+p}\prod_{u\in K}2^{\lambda_u+2-2^{n-|u|+1}+\sum_{j=|u|+1}^n 2^{j-|u|}m_j}
\end{equation}
elements $C'\in\mathcal{C}_n$ with $d(\ell_C,\ell_{C'})=2^{p-s_k}$ and $\LCUT(C,C')=(K,\lambda)$. 

Note that $K$ can be formed by starting with $K_0=\{\varnothing\}$ and then replacing a vertex by its two children a finite number of times to obtain cutsets $K_0,K_1,\ldots,K_M=K$. Each such replacement step preserves the value of
\[
	\sum_{u\in K_m} 2^{n-|u|+1}\,.
\]
Therefore, since $|K|\le 2^{k}$, we have
\begin{equation} \label{eq:prod-over-K}
	\prod_{u\in K} 2^{2-2^{n-|u|+1}}= 2^{2|K|}2^{-2^{n+1}} \le 2^{2^{k+1}}\cdot 2^{-2^{n+1}}\,.
\end{equation}
Combining \eqref{eq:cond_P}, the count \eqref{eq:sets-for-given-LCUT} of Cantor sets corresponding to a given cutset, and  \eqref{eq:prod-over-K}, it follows that the expected number of $C'\subset A_{s_n}$ with $\LCUT(C,C')=(K,\lambda)$, $d(\ell_C,\ell_{C'})=2^{p-s_k}$, conditional on $C\subset A_{s_n}$, is 
\[
2^{m_{k+1}+\ldots+m_n+p}\prod_{u\in K}2^{2-2^{n-|u|+1}}
\le 2^{-2^{n+1}+2^{k+1}+m_{k+1}+\ldots+m_n+p}\,.
\]

We can crudely bound the number of cutsets of $T(k)$ by $2^{2^k}$ (for example, this is larger than the number of subsets of $T(k-1)$). Given a cutset $K$ of $T(k)$, the number of labels $\lambda$ satisfying $1\le\lambda_u\le m_{|u|}\le m_{k}$ is bounded (again, crudely) by $m_k^{2^{k}}$. Thus, the total number of labelled cutsets $(K,\lambda)$ of $T(k)$ is smaller than $2^{2^k}m_k^{2^{k}}$.

Finally, we treat the two lines $\ell'$ in $\mathcal{L}_n$ with $d(\ell',\ell_C)\le 2^{-s_n}$. Then $k=n$ and the only difference to the $d(\ell,\ell')>2^{-s_n}$ is that we also need to take into account the case $p=0$ and allow the possibility $\lambda_u=0$ if $|u|=n$. The upper-bound for the number of cut-sets remains the same while for the number of labellings of a fixed cut-set, the crude bound gets replaced by $(m_n+1)^{2^n}$. 

All in all, adding up over all possible values of $k$ and $p$, yields the upper bound
\begin{align*}
\sum_{C'\in\mathcal{C}_n}\PP\left(C'\subset A_{s_n}\,|\,C\subset A_{s_n}\right)&\le 2\sum_{k=1}^n 2^{2^k}(m_k+1)^{2^k}\sum_{p=0}^{m_k}  2^{-2^{n+1}}2^{2^{k+1}+m_{k+1}+\ldots+m_n}2^p\\
&\le 2^{1-2^{n+1}}\sum_{k=1}^n (m_k+1)^{2^k} 2^{2^{k+2}+m_{k+1}+\ldots+m_n}\sum_{p=0}^{m_k}2^p\\
&\le 2^{2-2^{n+1}}\sum_{k=1}^n (m_k+1)^{2^k} 2^{2^{k+2}+m_{k}+\ldots+m_n}\,.
\end{align*}
Using \eqref{eq:m_n}, it follows that $2^{k+2}\le m_{k-2}$ and $(m_k+1)^{2^k}\le 2^{m_{k-1}}$ for all $k$. We conclude that
\begin{align*}
&\sum_{C'\in\mathcal{C}_n}\PP\left(C'\subset A_{s_n}\,|\,C\subset A_{s_n}\right)\\
&=O(1) 2^{-2^{n+1}}\sum_{k=1}^n2^{m_{k-2}+m_{k-1}+\ldots+m_n}\\
&=O(1) 2^{-2^{n+1}+m_1+\ldots+m_n}\,.
\end{align*}
Thus, we have verified \eqref{eq:ultimate_goal}.
\end{proof}

We can now conclude the proof of Claim \ref{D}, by adapting the idea from \S\ref{subsec:B}. Recall that $Y_n$ is the number of $C\in\mathcal{C}_n$ that are contained in $A_{s_n}$. Using Proposition \ref{prop:correlation-Cantor} and \eqref{eq:expect-Cantor-Yn}, we have
\begin{align*}
	\EE(Y_n^2) &= \sum_{C,C'\in\mathcal{C}_n}  \PP(C, C'\subset A_{s_n}) \\
	&= \sum_{C\in\mathcal{C}_n} \PP(C\subset A_{s_n}) \sum_{C'\in\mathcal{C}_n} \PP(C'\subset A_{s_n}\,|\,C\subset A_{s_n})\\
	 &= O(1)2^{-2^{n+1}+s_n}\, \EE(Y_n) = O(1)E(Y_n)^2\,.
\end{align*}
The same standard argument from \S\ref{subsec:B} now implies that $Y_n\ge 1$ for all $n$ with positive probability. Condition on this event. Hence, there are lines $\ell_n\in\mathcal{L}_n$ and $C_n\in\mathcal{C}_n$, $C_n\subset \ell_n$ such that $C_n\subset A_
{s_n}$. 

Consider the Hausdorff metric on the space of compact subset of $[0,1]^2$, endowed with the dyadic metric. This space is itself compact, and so $C_n$ has a subsequence $C_{n_j}$ converging to a compact set $C\subset[0,1]^2$. Since $C_n\subset A_{s_n}$ for all $n$, it follows that $C\subset A$. Also, $C$ is contained in some line $\ell$, which is the Hausdorff limit of $\ell_{n_j}=\ell_{C_{n_j}}$.

For each $n$, if $j$ is large enough then the line containing $C_{n_j}$ is contained in the $2^{-s_n}$-dyadic interval to which $\ell$ belongs. Similarly, if $j$ is large enough then the dyadic intervals of size $2^{-s_n}$ that make up $C_{n_j}$ within $\ell_{n_j}$ also stabilize, and since $C_{n_j}\in\mathcal{C}_{n_j}$ this implies that the union of all such intervals is in $\mathcal{C}_n$.

Summarizing, with positive probability there exists a nested sequence $E_1\supset E_2\supset\ldots$ such that $\cap_{n=1}^\infty E_n\subset A$, such that $E_n$ has the form
\begin{equation}\label{eq:E_n}
	E_n=\bigcup_{j=1}^{2^n}I_j\times[k 2^{-s_n},(k+1)2^{-s_n}]\,,
\end{equation}
where $\cup_{j=1}^{2^n}I_j \times \{(k+\tfrac12)2^{-s_n}\}\in\mathcal{C}_n$.
 
Now $C=\cap_{n=1}^\infty E_n\subset A$ is a Cantor set on the horizontal line $\ell$, and it is a standard procedure to check, using \eqref{eq:m_n}, that $\mathcal{H}^{\phi}(C)=\Omega(1)$ for $\phi(r)=\frac{\log\log(1/r)}{\log(1/r)}$. Indeed, there is a natural mass distribution $\mu$ on $C$, defined by giving the same mass $2^{-n}$ to each of the dyadic cubes making up $E_n$. Let $I$ be any interval in $\ell$ of size $r\in (0,1)$. Suppose $2^{-s_{n+1}}\le r< 2^{-s_n}$. Then $I$ intersects at most $2$ of the dyadic squares making up $E_n$ and so, recalling \eqref{eq:m_n},
\[
\mu(I)= 2\cdot 2^{-n} \le O(1) \phi(2^{-s_{n+1}}) \le O(1)\phi(r).
\]
As in the proof of the mass distribution principle, we conclude that if $(I_j)$ is any cover of $C$ by intervals, then $\sum_j \phi(\diam I_j )\ge \Omega(1)$.

We have shown that, with positive probability, the fractal percolation set $A$ has positive $\mathcal{H}^{\phi}$ measure on some horizontal line. Upgrading positive probability to almost surely on non-extinction is immediate since the event
\[
    \mathcal{H}^\phi(A\cap \ell)=0\text{ for all horizontal lines }\ell
\]
is inherited.


\end{document}